\newtheorem{thm}{Theorem}[section]
\newtheorem{cor}[thm]{Corollary}
\newtheorem{prop}[thm]{Proposition}
\newtheorem{lem}[thm]{Lemma}
\theoremstyle{definition}
\newtheorem{defn}[thm]{Definition}
\newtheorem{exmp}[thm]{Example}
\theoremstyle{remark}
\newtheorem{rem}[thm]{Remark}
\let\c@equation\c@thm
\numberwithin{equation}{section}
\title{On the Speed of an Excited Asymmetric Random Walk}
\author{Mike Cinkoske, Joe Jackson, Claire Plunkett}
\begin{document}
\maketitle

\begin{abstract}
An excited random walk is a non-Markovian extension of the simple random walk, in which the walk's behavior at time $n$ is impacted by the path it has taken up to time $n$. The properties of an excited random walk are more difficult to investigate than those of a simple random walk. For example, the limiting speed of an excited random walk is either zero or unknown depending on its initial conditions. While its limiting speed is unknown in most cases, the qualitative behavior of an excited random walk is largely determined by a parameter $\delta$ which can be computed explicitly. Despite this, it is known that the limiting speed cannot be written as a function of $\delta$. We offer a new proof of this fact, and use techniques from this proof to further investigate the relationship between $\delta$ and limiting speed. We also generalize the standard excited random walk by introducing a ``bias" to the right, and call this generalization an excited asymmetric random walk. Under certain initial conditions we are able to compute an explicit formula for the limiting speed of an excited asymmetric random walk.
\end{abstract}

\section{Introduction}

A simple random walk is a discrete Markovian model of random motion whose properties are well understood. More specifically, a simple random walk $(W_{n})_{n\geq 0}$ is a Markov chain with transition probabilities
\begin{align*} p(j,k) = \text{P} \left( W_n = k \ | \ W_{n-1} = j \right) = \begin{cases} 
p & k = j+1 \\
1-p & k = j-1 \\
0& \text{otherwise.}\\
\end{cases}
\end{align*}
We can give the following informal interpretation of a simple random walk: a random walker starts at 0 and takes an infinite sequence of independent steps. Each step is to the right with probability $p \in (0,1)$ and to the left with probability $1-p$. Alternatively, a simple random walk $(W_{n})_{n\geq 0}$ can be defined as $ W_n = \sum_{i=1}^n \omega_i $ where the $ \omega_i $ are i.i.d. random variables with 
\begin{align*}
& \text{P} \left( \omega_i = 1 \right) = p
\\
& \text{P} \left( \omega_i = -1 \right) = 1- p.
\end{align*}

Because simple random walks can be represented both as Markov chains and as sums of i.i.d. random variables, their properties are well understood. For example, one-dimensional simple random walks are recurrent, i.e. they return to $0$ infinitely many times with probability $1$, if and only if $p = \frac{1}{2}$. Recall that if a Markov chain is not recurrent, it is transient, i.e. returns to $0$ only a finite number of times. We define the limiting speed of any random walk $(R_{n})_{n \geq 0}$ to be
\begin{align}
	\lim_{n\rightarrow \infty} \frac{R_{n}}{n},
\end{align}
and note that this definition of limiting speed applies also to the more complex random walk variations described below. For simple random walks, the following proposition is known:

\begin{prop}
The limiting speed of a simple random walk $(W_{n})_{n\geq 0}$ with probability $p$ of stepping to the right is $2p - 1$.
\end{prop}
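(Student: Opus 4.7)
The plan is to exploit the i.i.d.\ representation $W_n = \sum_{i=1}^n \omega_i$ given in the excerpt, which reduces the problem to a direct application of the Strong Law of Large Numbers. Because the $\omega_i$ are i.i.d.\ and bounded, they certainly have finite first moment, so SLLN applies without any subtlety.

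First I would compute the mean of a single step. Since $\omega_i$ takes the value $+1$ with probability $p$ and $-1$ with probability $1-p$, one has
\[
\mathrm{E}[\omega_i] = (1)\cdot p + (-1)\cdot(1-p) = 2p - 1.
\]
Next I would invoke the Strong Law of Large Numbers on the sequence $(\omega_i)$: with probability one,
\[
\frac{W_n}{n} = \frac{1}{n}\sum_{i=1}^n \omega_i \;\longrightarrow\; \mathrm{E}[\omega_1] = 2p - 1.
\]
This is exactly the limiting speed as defined in the excerpt, finishing the argument.

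There is essentially no obstacle here; the only thing to be careful about is that the convergence in the definition of limiting speed is meant to be almost sure (which is what SLLN delivers), as opposed to convergence in probability or in $L^1$. Since the paper will later treat excited random walks where this i.i.d.\ structure is lost and the analogous statement is genuinely hard, it is worth stating clearly that the proposition is simply the SLLN applied to the step distribution, with no further input required.
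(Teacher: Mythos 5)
Your proof is correct and follows exactly the same route as the paper's: write $W_n$ as the sum of the i.i.d.\ steps $\omega_i$, compute $\mathbb{E}[\omega_1] = 2p-1$, and apply the Strong Law of Large Numbers. The remarks about boundedness guaranteeing a finite first moment and about the almost-sure mode of convergence are fine additions but not needed beyond what the paper already does.
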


\begin{proof}
  We have from above that
  \begin{align*}
    \lim_{n\rightarrow \infty} \frac{W_{n}}{n} = \lim_{n \rightarrow \infty} \frac{\sum_{i=1}^n \omega_i }{n}.
  \end{align*}
  Applying the Strong Law of Large Numbers, we obtain
  \begin{align*}
    \lim_{n \rightarrow \infty} \frac{\sum_{i=1}^n \omega_i }{n} = \mathbb{E}[\omega_1] = 2p - 1.
  \end{align*}
\end{proof}

Simple random walks have diverse applications to real world problems, but some applications are better modeled by processes which are non-Markovian. To that effect, mathematicians have introduced a number of self-interacting random walks. One variation is the excited random walk, first introduced by Benjamini and Wilson \cite{benjamini} in 2003 and later generalized by Zerner \cite{zerner} and Kosygina and Zerner \cite{kosygina2}.

An excited random walk is a non-Markovian extension of a simple random walk which can be described informally as follows: at each site on the number line, we place $ M $ cookies, each of which has a ``strength." The random walker starts at the origin and takes an infinite sequence of steps. The probability distribution of each step depends on the number of cookies left at the walker's current location, and when the walker leaves a site with cookies remaining, he eats a cookie (See Figure \ref{fig:erw}).
\begin{figure}[h!]
	\centering
	\includegraphics[width=1\linewidth]{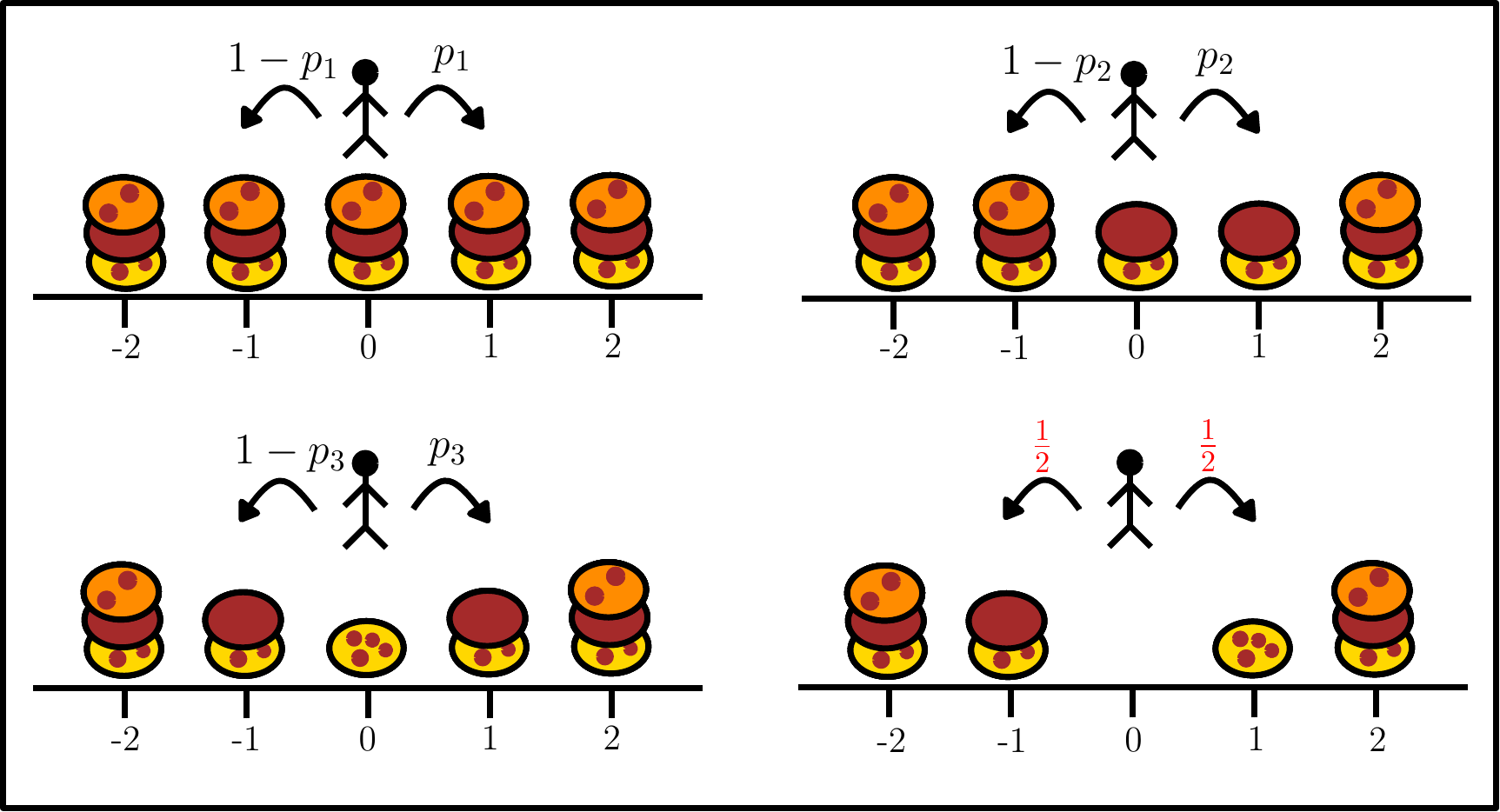}
	\caption{Excited Random Walk with 3 Cookies}
	\label{fig:erw}
\end{figure}

Mathematically, we specify the number of cookies $M$ and a vector of cookie strengths $\mathbf{p} \in \mathbb{R}^{M}$ with $p_{i} \in (0,1), \ i \in \{1,2,...,M\}$. We let $(Y_{n})_{n\geq 0}$ be an excited random walk and define the probabilities of stepping right or left given the first $n$ steps of the walk to be 
\begin{align*}
& \text{P} \left( Y_{n+1} - Y_n = 1 \ | \ Y_0, Y_1, \ldots, Y_n \right) = 
\begin{cases}
p_i \quad \text{if } \# \left\{j; Y_{j} = Y_n \right\} = i \leq M
\\
1/2 \quad \text{otherwise,}
\end{cases}
\\
& \text{P} \left( Y_{n+1} - Y_n = -1 \ | \ Y_0, Y_1, \ldots, Y_n \right) = 1 - \text{P} \left( Y_{n+1} - Y_n = 1 \ | \ Y_0, \ldots, Y_n \right).
\end{align*}
That is, the walker's probability of stepping right on his $i^{th}$ visit to a site is given by the strength of the $i^{th}$ cookie if $i \leq M$, and is $\frac{1}{2}$ if $i>M$. In this model, the probability of stepping left or right at time $n$ depends on the path the walker took up to the current time, and hence $(Y_{n})_{n \geq 0}$ is neither a Markov chain nor a sum of i.i.d. random variables. This makes analyzing its asymptotic behavior, such as its recurrence or limiting speed, difficult. Nevertheless, the following theorem has been proven, which shows how the qualitative behavior of an excited random walk is determined by the parameter $\delta(M,\mathbf{p}) $, defined as 
\begin{align}
\delta(M,\mathbf{p}) = \sum_{i = 1}^{M} (2p_{i} - 1).
\label{delta}
\end{align}

\begin{thm}[Zerner \cite{zerner}, Basdevant and Singh \cite{bassingh}, Kosygina and Zerner \cite{kosygina2}]
	A standard excited random walk with $M$ cookies and cookie strength vector $\mathbf{p}$ is transient to the right if and only if $\delta(M,\mathbf{p}) > 1$. It has positive speed if and only if $\delta(M,\mathbf{p}) > 2$. For $ -2 \leq \delta(M, \mathbf{p} ) \leq 2 $, the walk has zero speed.
	
	\label{verybasics}
\end{thm}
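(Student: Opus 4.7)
The plan is to prove the theorem via the branching-process encoding of the excited walk that underlies all three of the cited arguments. Let $T_n := \inf\{k : Y_k = n\}$ and, for $0 \leq i < n$, let $D_i^n$ denote the number of downcrossings of the edge $(i-1, i)$ before time $T_n$. The first step is to verify that the sequence $(D_{n-1}^n, D_{n-2}^n, \ldots, D_0^n)$ is a Markov chain in which $D_{i-1}^n$ is, conditionally on $D_i^n = k$, a sum of geometric random variables whose success probabilities are determined by the cookies at site $i-1$ and equal $1/2$ once the $M$ cookies there are exhausted. Without cookies this is a critical branching process, and a direct computation of the expected net contribution of the $M$ cookies shows that the drift injected by the cookie pile is precisely $\delta$.

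With this representation in hand, I would deduce the three claims in turn. For transience, $T_n < \infty$ for all $n$ if and only if the branching chain above does not hit $0$ before the cookie-induced migration is exhausted; a comparison with an inhomogeneous Galton--Watson process (following Zerner) gives survival, hence transience to the right, exactly when $\delta > 1$. For the speed, I would invoke Zerner's regeneration structure: on transience the walk admits i.i.d.\ regeneration times $\tau_k$ and $Y_n/n \to 1/\mathbb{E}[\tau_1]$ by the renewal theorem, and finiteness of $\mathbb{E}[\tau_1]$ reduces, via a standard computation relating hitting times to population totals, to finiteness of the first moment of the stationary distribution of the branching chain. A Lamperti-type analysis of this chain, carried out by Basdevant--Singh and extended by Kosygina--Zerner, shows that this stationary first moment is finite precisely when $\delta > 2$, giving positive speed in that range and zero speed for $1 < \delta \leq 2$. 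The mirrored claims for $\delta < -1$ and $\delta < -2$ follow by running the same argument on the reflected walk, under which $p_i \mapsto 1 - p_i$ and $\delta$ flips sign.

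The main obstacle is the sharp threshold $\delta = 2$ for positive speed. While the transience threshold $\delta = 1$ can be handled by elementary Foster--Lyapunov arguments applied to $\log$-type test functions on the branching chain, the speed threshold requires sharp polynomial tail estimates on the regeneration time whose exponent depends continuously on $\delta$. These estimates are the core technical content of Basdevant--Singh and of Kosygina--Zerner, and I would not attempt to reprove them from scratch; rather, I would carefully set up the branching-process correspondence and then cite those tail asymptotics as a black box, using them just enough to separate the $\delta \in (1,2]$ (transient, zero speed) and $\delta > 2$ (transient, positive speed) regimes.
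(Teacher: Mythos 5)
First, note that the paper does not prove Theorem \ref{verybasics} at all: it is stated as an attributed result of Zerner, Basdevant--Singh, and Kosygina--Zerner, and the surrounding text (the processes $U_x^n$ and $Z_n$, Theorem \ref{ZandU}, and the speed formula of Theorem \ref{mediumbasics}) reproduces, without proof, exactly the machinery those papers use. So there is no in-paper argument to compare yours against; in architecture, your outline is the argument of the cited sources and of the paper's own expository Section 1.2 --- the downcrossing chain you call $(D_{n-1}^n,\ldots,D_0^n)$ is the paper's $(U_{n-1}^n,\ldots,U_0^n)$, and your route to the speed via regeneration times and the stationary first moment is the content of Theorem \ref{mediumbasics}.

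As a proof, however, your proposal has genuine gaps, and they sit precisely at the content of the theorem. First, you black-box the threshold analysis: the claims ``the branching chain survives iff $\delta>1$'' and ``its stationary distribution has finite first moment iff $\delta>2$'' \emph{are} the theorem, modulo the comparatively soft walk-to-chain correspondence, so citing the Lamperti-type tail estimates as a black box reduces your proof to the citation the paper already gives. Second, the critical cases are part of the statement: ``transient iff $\delta>1$'' requires recurrence at $\delta=1$, and ``zero speed for $-2\le\delta\le 2$'' includes the endpoints $\delta=\pm2$; these are exactly where Foster--Lyapunov arguments with logarithmic test functions become delicate, and your sketch does not engage them. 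Third, the zero-speed claim on $-2\le\delta\le 2$ covers the recurrent regime $-1\le\delta\le 1$, where the regeneration structure you invoke is unavailable; there one needs a separate argument that the limit $Y_n/n$ exists at all (Zerner's law of large numbers), which your outline omits. If you intend this as an accurate roadmap of the literature, it is one; if you intend it as a proof, these three points are where it is incomplete.
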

 \noindent While this theorem suggests $\delta(M,\mathbf{p})$ might determine the speed of an excited random walk, it is known that the speed of an excited random walk cannot be written as a function of $\delta(M,\mathbf{p})$ \cite{holmes}. In Section 4 we present a new proof of this fact. We use the techniques in our proof to give several concrete examples of known monotonicity properties, and we further show that $\delta(M,\mathbf{p})$ and $v(M,\mathbf{p})$ are unrelated when $\delta > 2$, in the sense that there exists an excited random walk with arbitrarily large $\delta$ parameter and arbitrarily small speed.

\subsection{What is an Excited Asymmetric Random Walk?}

We now introduce the primary object of analysis in this paper: the excited asymmetric random walk. An excited asymmetric random walk is a generalization of the excited random walk, in which the probabilities of stepping left or right from a site with no cookies need not be $\frac{1}{2}$ (see Figure \ref{fig:earw}).
\begin{figure}[h!]
	\centering
	\includegraphics[width=1\linewidth]{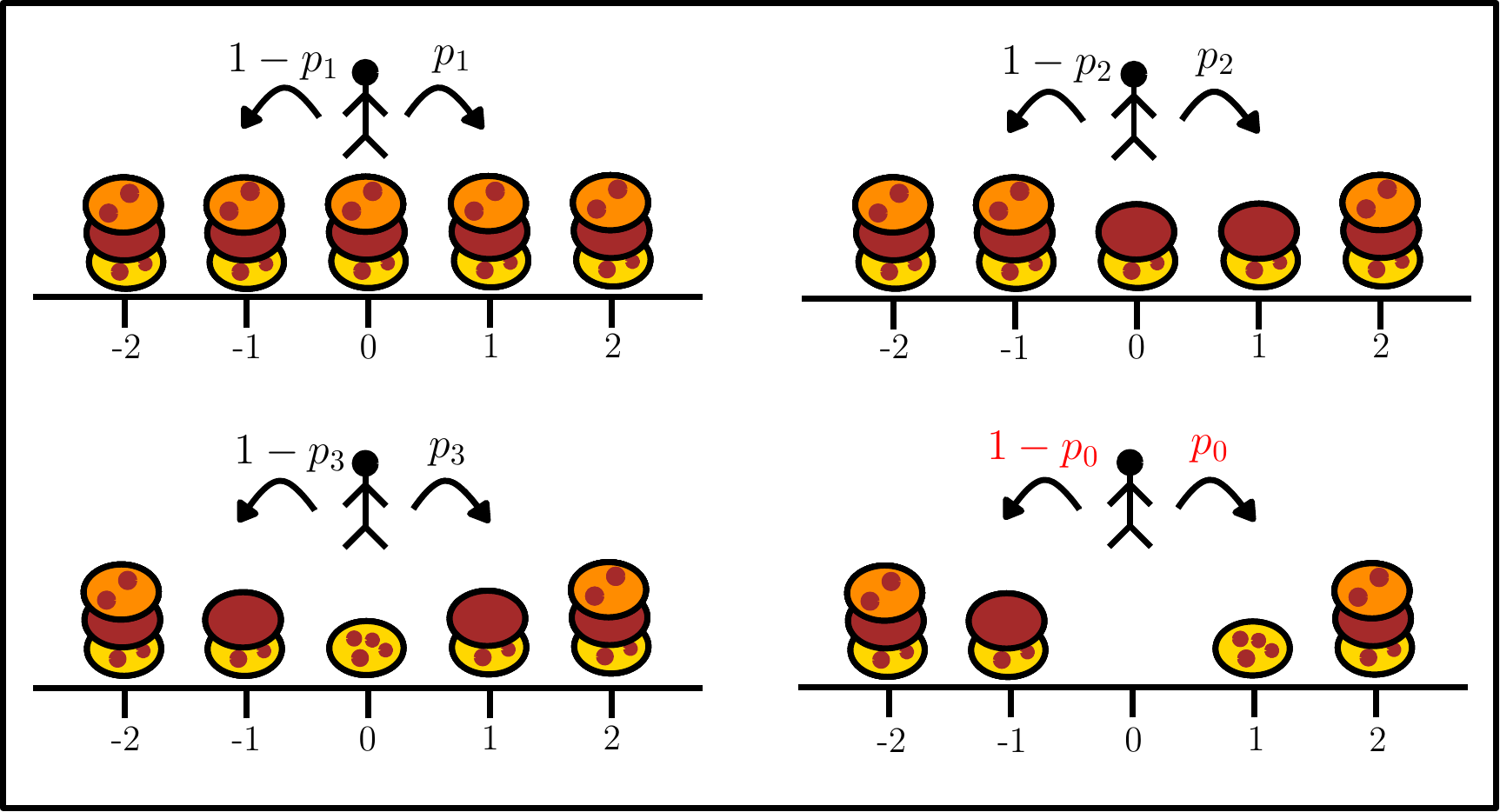}
	\caption{Excited Asymmetric Random Walk with 3 Cookies}
	\label{fig:earw}
\end{figure}
More formally, if $(X_{n})_{n \geq 0}$ is an excited asymmetric random walk, we specify a number of cookies $M$, a vector of cookie strengths $\mathbf{p} \in \mathbb{R}^{M}$, and a bias parameter $p_{0}$. Then the probability of stepping right or left given the first $n$ steps of the walk is 
\begin{align*}
& \text{P} \left( X_{n+1} - X_n = 1 \ | \ X_0, X_1, \ldots, X_n \right) = 
\begin{cases}
p_i \quad \text{if } \#\{j; X_{j} = X_n\} = i \leq M
\\
p_0 \quad \text{otherwise,}
\end{cases}
\\
& \text{P} \left( X_{n+1} - X_n = -1 \ | \ X_0, X_1, \ldots, X_n \right) = 1 - \text{P} \left( X_{n+1} - X_n = 1 \ | \ X_0, \ldots, X_n \right).
\end{align*}
We will assume throughout this paper that $p_{0} > \frac{1}{2}$; a symmetry argument extends our analysis to the other case.

Our motivation for studying this type of random walk is as follows: in a standard excited random walk, the speed function is known to be zero when the number of cookies $M$ is less than 3, since $\delta \leq 2$ if $ M < 3$ by \eqref{delta}, and has so far been too difficult to compute exactly when $M \geq 3$ unless the speed is zero. Adding a drift to the excited random walk makes the speed function nontrivial, even when $M$ is small (See Lemma \ref{thelemma} below). In the case of $M = 1$, we can compute the speed explicitly.

\begin{thm}
	The limiting speed of an excited asymmetric random walk with one $p_1$ cookie and bias parameter $p_0$, with $p_{1} \in (0,1), p_{0} \in ( 1/2,1)$ is given by
	\begin{align}
	v^{*} \left(p_0,p_1 \right) = \frac{2p_0 - 1}{2p_0 - 1 + 2(1-p_1)}. \label{speedeq}
	\end{align}
	
	\label{speedthm}
	
\end{thm}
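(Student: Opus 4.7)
The plan is to compute $v^*(p_0, p_1)$ by analyzing the hitting times $T_n = \inf\{k \geq 0 : X_k = n\}$ through a Doob-type martingale that bookkeeps the ``cookie correction'' made at each fresh site. Let $F_n = |\{X_0, X_1, \ldots, X_n\}|$ denote the range of the walk at time $n$ (with the convention $F_{-1} := 0$), and observe that $X_n$ is a fresh site (cookie not yet eaten) precisely when $F_n - F_{n-1} = 1$. Because the conditional drift of $X_{n+1} - X_n$ given $\mathcal{F}_n$ is $2p_1 - 1$ at a fresh step and $2p_0 - 1$ otherwise, a direct calculation shows that
\[
M_n \;=\; X_n \;-\; (2p_0 - 1)\,n \;-\; 2(p_1 - p_0)\,F_{n-1}
\]
is a martingale with $M_0 = 0$. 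Before applying it, I need three preliminary facts: the walk is a.s.\ transient to $+\infty$; $E[T_n] < \infty$ for each $n$; and the running minimum $\mathcal{M}_n := \min_{0 \leq k \leq T_n - 1} X_k$ satisfies $\sup_n E[|\mathcal{M}_n|] < \infty$. All three reduce to a comparison with a plain biased walk of drift $2p_0 - 1$, using that each cookie perturbs the dynamics at a given site only once; this yields the requisite geometric tail bounds on $P(\mathcal{M}_\infty \leq -j)$ and $P(T_n \geq m)$.

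Given these inputs, applying the optional stopping theorem to $M$ at $T_n$ gives $E[M_{T_n}] = 0$. Using that $X_{T_n} = n$ together with the observation that the walker must visit every integer in $[\mathcal{M}_n, n-1]$ before first reaching $n$, so that $F_{T_n - 1} = n - \mathcal{M}_n$, a short algebraic rearrangement produces
\[
(2p_0 - 1)\,E[T_n] \;=\; n\,\bigl((2p_0 - 1) + 2(1 - p_1)\bigr) \;+\; 2(p_1 - p_0)\,E[\mathcal{M}_n].
\]
The monotone convergence $\mathcal{M}_n \downarrow \mathcal{M}_\infty$ together with $E[|\mathcal{M}_\infty|] < \infty$ makes the last term $o(n)$, so $E[T_n]/n \to ((2p_0-1) + 2(1-p_1))/(2p_0-1)$ as $n \to \infty$.

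To close the argument, I will upgrade this $L^1$ limit to the almost sure convergence $T_n/n \to 1/v^*$ by standard arguments for transient walks (noting that $X_n/n$ and $n/T_n$ share an a.s.\ limit on $\{X_n \to +\infty\}$); inverting then yields the claimed formula $v^* = (2p_0-1)/((2p_0-1) + 2(1-p_1))$. The main technical obstacle is securing the preliminary tail estimates on $\mathcal{M}_\infty$ and $T_n$, since the martingale-based computation itself is a one-liner once these are in place. Fortunately, because there is only one cookie per site and the underlying drift $2p_0-1$ is strictly positive, these estimates follow from a straightforward coupling with a biased random walk.
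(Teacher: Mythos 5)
Your route is genuinely different from the paper's: instead of the backwards branching-like process and the recursion for the generating function of its stationary law, you extract the answer from the martingale $M_n = X_n - (2p_0-1)n - 2(p_1-p_0)F_{n-1}$ and optional stopping at $T_n$. The core identity is correct: the conditional drift is $(2p_0-1)+2(p_1-p_0)(F_n-F_{n-1})$, the visited set at time $T_n-1$ is the interval $\{\mathcal{M}_n,\dots,n-1\}$ so $F_{T_n-1}=n-\mathcal{M}_n$, and your rearrangement gives exactly $E[T_n]/n \to 1+2\frac{1-p_1}{2p_0-1}$, which matches the paper's computation $\mathbb{E}_\pi[Z_0]=\frac{1-p_1}{2p_0-1}$ inserted into $v=1/(1+2\mathbb{E}_\pi[Z_0])$ (note $T_n = n + 2\sum_x U_x^n$, so the two calculations are really counting the same left steps). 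This is an attractive and more elementary derivation of the formula itself.

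However, there is a genuine gap exactly where you locate the "technical obstacle," and your proposed fix does not work. You claim transience, $E[T_n]<\infty$, and $E[|\mathcal{M}_\infty|]<\infty$ all "follow from a straightforward coupling with a biased random walk" of drift $2p_0-1$. Stochastic domination of cookie walks requires pointwise domination of the cookie environments, and the only homogeneous biased walks lying \emph{below} this environment have step-right probability at most $\min(p_0,p_1)=p_1$; when $p_1<1/2$ such a walk is transient to the \emph{left}, so the comparison yields none of the three estimates (and the theorem is claimed for all $p_1\in(0,1)$). This is precisely the difficulty the paper's Lemma \ref{thelemma} is built to handle: one must couple with a \emph{finite-cookie} excited random walk whose cookie vector $(p_1,p_0,\dots,p_0)$ has $\delta>2$ and then invoke the Basdevant--Singh positive-speed criterion (or cite Kosygina--Zerner directly); no comparison with a plain biased walk suffices. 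A second, smaller gap: passing from $E[T_n]/n\to 1/v^*$ to the almost-sure statement $X_n/n\to v^*$ needs more than "standard arguments" --- given that an a.s.\ limit $T_n/n\to 1/v$ exists, Fatou only yields $v\ge v^*$, and identifying $v=v^*$ requires uniform integrability of $T_n/n$ or a regeneration-structure law of large numbers, neither of which is supplied. Both gaps are fixable, but they require importing essentially the same external machinery the paper uses, so the preliminary estimates cannot be dismissed as routine.
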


As an illustration of this speed, we have Figure \ref{fig:speed} which shows the speed as a function of $p_0 \in (.5,1)$ for $p_1 = 0.8, \ 0.9, \ 0.99 $.

\begin{figure}[h!]
	\centering
	\includegraphics[width=.7\linewidth]{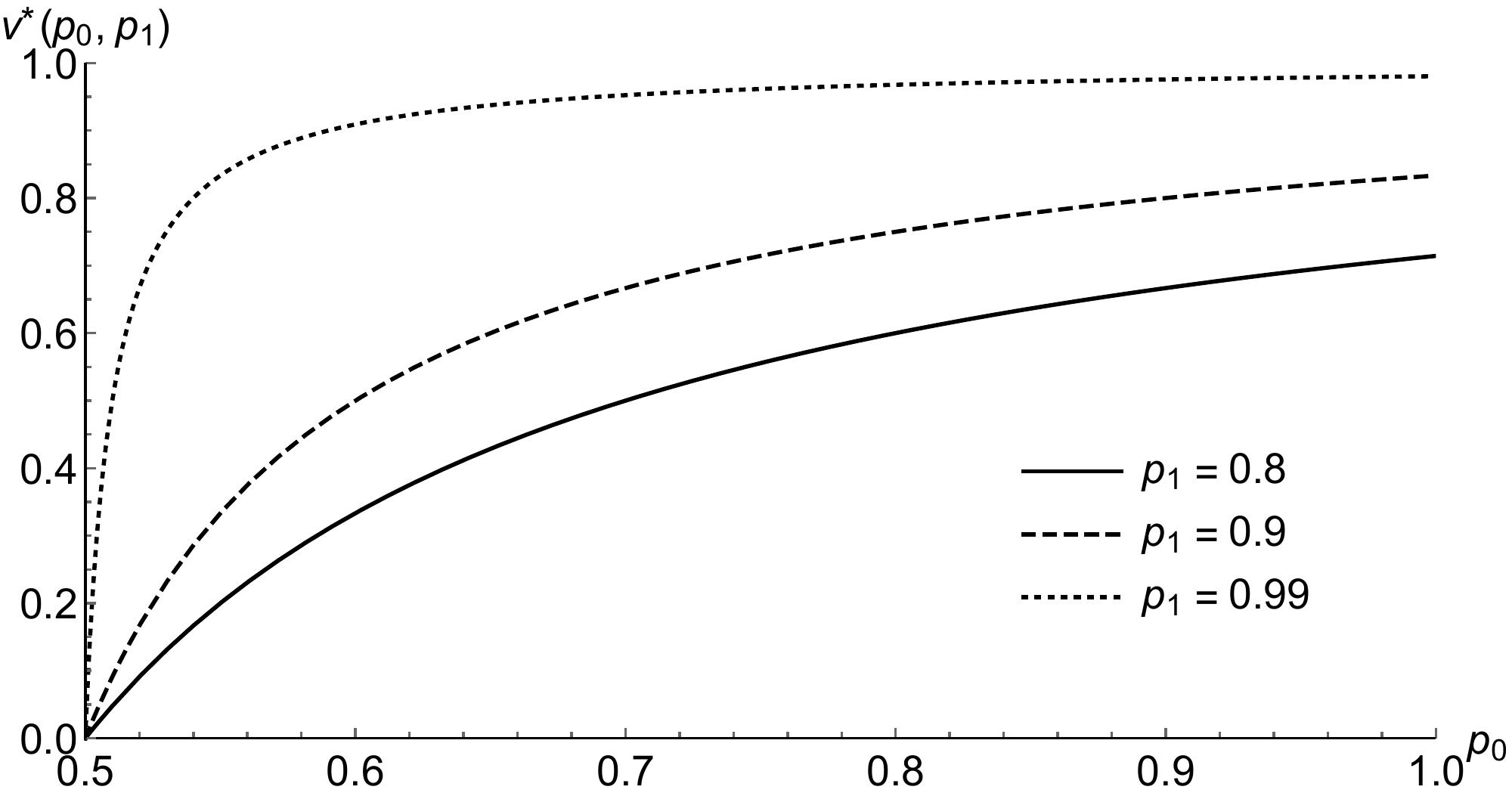}
	\caption{$ v^{*} \left(p_0, p_1 \right) $ for three values of $p_1$}
	\label{fig:speed}
\end{figure}

Before giving the proof of our main results, first we will review some results about standard excited random walks that are needed for our analysis.

\subsection{An Associated Markov Chain}

While Theorem \ref{verybasics} gives important information about the qualitative behavior of excited random walks, quantitative analysis will require a more precise probabilistic statement of the speed. To this end, we introduce what is known in the literature as the backwards branching-like process associated to a random walk $ ( X_n )_{n \geq 0} $. The backwards branching-like process $( Z_n )_{n \geq 0}$ is Markovian and can be associated to both standard excited random walks and excited asymmetric random walks, so in the remainder of this section we will use the term excited random walk to refer to both variations.  We begin by defining the random variables 
\begin{align}
& T_n = \inf_{t \geq 0} \{ t : X_t = n \}, \label{tndef}
\\
& U_x^n = \# \{ t < T_n : X_t = x, X_{t+1} = x - 1 \}.
\end{align}
$T_{n}$ is interpreted as the hitting time of site $n$, while $U_x^n$ is the number of left steps from site $x$ by time $T_{n}$ (See Figure \ref{fig:bbp}). Under this definition, it is clear that $U_{n}^{n} = 0$ and that $U_{x}^{n}$ are random variables which are non-decreasing in $n$. We can think of $U_x^n$ as the number of left steps from $x$ before reaching $x + 1$ for the first time plus the number of left steps from $x$ between the first left step from $x+1$ and the first return to $x+1$ before $T_n$, plus the number of left steps from $x$ between the second left step from $x+1$ and the second return to $x+1$ before $T_n$, and so on. Thus the distribution of the random variable $ U_x^n $ is determined by the value of $ U_{x+1}^n $, and the process $(U_{n}^{n}, U_{n-1}^{n}, ..., U_{0}^{n})$ is a Markov chain. The transition probabilities are given by 
\begin{align}
p(l,m) & = \text{P} \left( U_x^n = m | U_{x+1}^n = l \right) \nonumber
\\
&= \text{P} \left( \text{$m$ steps left from $x$ by $T_n \ | \ l$ steps left from $x+1$ by $T_n$} \right) \nonumber
\\
&= \text{P} \left( \text{$m$ steps left from $x$ before $l+1$ steps right from $x$} \right).
\label{transitions}
\end{align}
The last equality follows from the the fact that if the walk takes $l$ left steps from $x + 1$, the walk must take $l$ steps right from $x$ to return to $x + 1$ after each left step, plus a step right from $x$ to reach $x + 1$ for the first time.

\begin{figure}[h!]
	\centering
	\includegraphics[width=.9\linewidth]{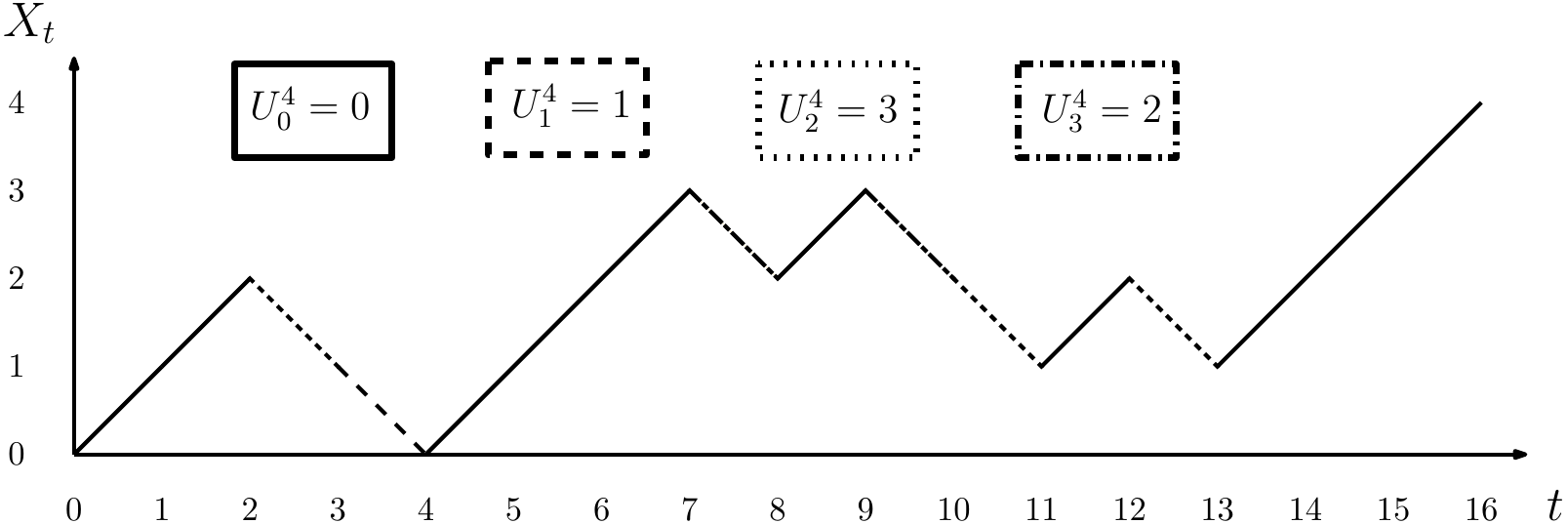}
	\caption{An example of $U_{x}^{4}$.}
	\label{fig:bbp}
\end{figure}

The backwards branching-like process $ \left(Z_n \right)_{n \geq 0} $, which will be constructed more rigorously in the remainder of this section, is a Markov chain designed  such that the following theorem holds.

\begin{thm}[Basdevant and Singh Proposition 2.2 \cite{bassingh}]
	If an excited random walk is recurrent or transient to the right, then for all $ n \in \mathbb{N} $, if $Z_0 = 0$, then the processes $ \left( Z_0, Z_1, \ldots, Z_n \right) $ and $ \left( U_n^n, U_{n-1}^n, \ldots, U_0^n \right) $ have the same distribution.
	\label{ZandU}
\end{thm}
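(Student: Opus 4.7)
The plan is to realize the walk via an independent cookie environment and then prove the Markov property directly from the flow structure of the trajectory. Introduce i.i.d.\ random variables $\xi(x,i)\in\{-1,+1\}$ for $x\in\mathbb{Z}$ and $i\geq 1$, with $\mathbb{P}(\xi(x,i)=1)$ equal to $p_i$ if $i\leq M$ and to $p_0$ otherwise, and construct $(X_n)$ so that on the walker's $i$-th visit to site $x$ his step is $\xi(x,i)$. Under the hypotheses of the theorem the walk is either recurrent or transient to the right, so $T_n<\infty$ almost surely. Since $Z_0=0$ matches $U_n^n=0$, it suffices to show that the reverse-indexed sequence $(U_n^n,U_{n-1}^n,\ldots,U_0^n)$ is a Markov chain whose one-step transitions agree with (\ref{transitions}).

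The key identity I would establish is that, for every $0\leq x<n$,
\begin{align*}
U_x^n \ =\ \#\bigl\{i<\tau_x : \xi(x,i)=-1\bigr\},
\end{align*}
where $\tau_x$ is the index of the $(U_{x+1}^n+1)$-th occurrence of $+1$ in the sequence $\xi(x,\cdot)$. To prove this, I first verify the elementary flow identity $R_x^n=U_{x+1}^n+1$, where $R_x^n$ counts right steps from $x$ by time $T_n$: the net number of traversals of the edge between $x$ and $x+1$ during $[0,T_n]$ must equal $1$. Then I observe that the walker's last step from $x$ before $T_n$ must be a right step, for otherwise the walker would land at $x-1$ and would have to revisit $x$ in order to reach $n>x$. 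Hence the walker visits $x$ exactly $U_x^n+U_{x+1}^n+1$ times, and the $(U_{x+1}^n+1)$-th $+1$ of $\xi(x,\cdot)$ is the final entry consumed, giving the displayed identity.

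To extract the Markov property, I would argue by downward induction on $y$ that $U_y^n$ is $\sigma(\xi(z,\cdot):z\geq y)$-measurable. The base case $U_n^n=0$ is immediate, and the inductive step is exactly the identity above applied at level $y$, which expresses $U_y^n$ as a function of $\xi(y,\cdot)$ and $U_{y+1}^n$. Consequently $(U_y^n)_{y>x}$ depends only on $(\xi(z,\cdot))_{z>x}$ and so is independent of $\xi(x,\cdot)$. Conditional on $(U_y^n)_{y>x}$ with $U_{x+1}^n=l$, the random variable $U_x^n$ is therefore the number of $-1$'s preceding the $(l+1)$-th $+1$ in an i.i.d.\ cookie sequence, which is exactly the distribution $p(l,\cdot)$ of (\ref{transitions}). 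Matching one-step transitions and initial conditions identifies the joint laws. I expect the main delicacy to be the ``last step from $x$ is a right step'' observation, which is where both the hypothesis on the walk ($T_n<\infty$ a.s.) and the constraint $x<n$ genuinely enter, and which is what lets us recast $U_x^n$ as a stopping-time statistic in an i.i.d.\ cookie sequence that is independent of everything above $x$.
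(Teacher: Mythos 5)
Your argument is correct and is exactly the proof the paper gestures at but omits (it defers to Basdevant--Singh and only remarks that one should ``interpret left steps as failures and right steps as successes''): the flow identity $R_x^n = U_{x+1}^n+1$, the observation that the last departure from $x$ before $T_n$ is a right step, and the resulting independence of $U_x^n$ from $(U_y^n)_{y>x}$ conditional on $U_{x+1}^n$ are precisely the ingredients needed to match the transition kernel \eqref{transitions} with \eqref{Aik}. The only cosmetic slip is calling the environment variables $\xi(x,i)$ ``i.i.d.''; they are independent but their distribution depends on $i$, which does not affect the argument.
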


\begin{rem}
The backwards branching-like process $ \left(Z_n \right)_{n \geq 0} $ is called branching-like because the sequence $ \left( U_n^n, U_{n-1}^n, \ldots, U_0^n \right) $ is like a branching process with migration, in which $ U_{x-1}^n $ is the result of $ U_x^n +1 $ individuals reproducing according to the probability distributions explained below. The process $( Z_n )_{n \geq 0}$ is called backwards because the index of the Markov chain $(U_{n}^{n}, U_{n-1}, ..., U_{0}^{n})$ starts at $n$ and decreases.
\end{rem}

To construct $ \left( Z_n \right)_{n \geq 0} $, we first define independent sequences of random variables $ \left( \xi_{n,j} \right)_{j \geq 0} $ by the probability of jumping right on the $j^{th}$ visit to site $n$ in the corresponding random walk. For example, in a standard excited random walk with $M$ cookies and cookie vector $\mathbf{p}$, we have
\begin{align*}
\xi_{n,j} = & \begin{cases}
\text{Bernoulli}(p_j) : j \leq M \\
\text{Bernoulli}(1/2) : j > M.
\end{cases}
\end{align*}
Then, we define the random variables $ A_{i,k} $ to be the number of ``failures" in the sequence of $ \left( \xi_{i,j} \right)_{j \geq 0} $ before $ k $ ``successes." That is,
\begin{align}
A_{i,k} = \min \left\{ n \geq 0 : \sum_{j=1}^{n+k} \xi_{i,j} = k \right\}.
\label{Aik}
\end{align}
Then the backwards branching-like process $ \left( Z_n \right)_{n \geq 0} $ is defined to be a Markov chain with transition probabilities 
\begin{align*}
& p(l,m) = \text{P} \left( Z_{n+1} = m | Z_n = l \right) = \text{P} \left( A_{n+1,l+1} = m \right).
\end{align*} % j = l, k = m

We omit a proof of Theorem \ref{ZandU}, but the connection between the processes $ \left( Z_0, Z_1, \ldots, Z_n \right) $ and $ \left( U_n^n, U_{n-1}^n, \ldots, U_0^n \right) $ can be seen by considering definitions \eqref{transitions} and \eqref{Aik} and interpreting left steps as failures and right steps as successes.

\begin{exmp}
	\label{trans-probs}
	For an excited asymmetric random walk with one $p_1$ cookie and bias parameter $p_0$, we have the following transition probabilities for the associated backwards branching-like process: 
	\begin{align*}
	& p(0,0) = p_1
	\\
	& p(0,k) = (1 - p_1)(1 - p_0)^{k-1}p_0 \quad \text{for } k > 0
	\\
	& p(1,0) = p_1p_0
	\\
	& p(1,k) = p_1(1 - p_0)^kp_0 + k(1 - p_1)(1 - p_0)^{k-1}p_0^2 \quad \text{for } k > 0
	\\
	& p(j,k) = \tbinom{k+j-1}{j-1} p_1(1 - p_0)^kp_0^j + \tbinom{k + j - 1}{j} (1 - p_1)(1 - p_0)^{k-1}p_0^{j+1}
	\\
	& \qquad \text{for } j \geq 2, k \geq 0.
	\end{align*}
	
\end{exmp}

Since the transition probabilities are all non-zero, the backwards branching-like process $ \left( Z_n \right)_{n \geq 0} $ is an irreducible Markov chain. If its unique stationary distribution exists, we will denote it as $\pi$ and we will use P$_\pi$ and $\mathbb{E}_\pi$ to denote probabilities and expectations conditional on $ Z_0 \sim \pi $.

The following lemma gives a useful decomposition of the random variables $A_{i,k}$, which we will use in our proof of Theorem \ref{speedthm}.

\begin{lem}[Basdevant and Singh 2008 \cite{bassingh}]
	\label{decomp}
	
	For all $j \geq M$, we have
	\begin{align*}
	A_{i,k}  \stackrel{\text{dist}}{=} A_{i,M} + \gamma_1 + \cdots + \gamma_{k-M+1}
	\end{align*}
	where $(\gamma_i)_{i \geq 0}$ are i.i.d. geometric random variables independent of $A_{i,M-1}$ with parameter $p_0$, i.e. $P(\gamma_1 = j) = (1 - p_0)^j p_0$, where $p_{0}$ is the probability of stepping right on the $(M+1)^{st}$ visit to a site in the corresponding random walk.
\end{lem}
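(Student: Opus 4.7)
The plan is to exploit the structural feature that from visit $M+1$ onward the trials $\xi_{i,j}$ are i.i.d.\ Bernoulli($p_0$), so that once $M$ successes have been accumulated the wait for further successes is a pure geometric renewal. Concretely, I would set $\tau := A_{i,M} + M$, which by the definition of $A_{i,M}$ is the position in the sequence $(\xi_{i,j})_{j\geq 1}$ at which the $M$-th success occurs. Because $\tau \geq M$, every index $j > \tau$ automatically satisfies $j > M$, so by the construction of $(\xi_{i,j})$ the tail variables $\xi_{i,\tau+1},\xi_{i,\tau+2},\dots$ are i.i.d.\ Bernoulli($p_0$).

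Second, since the whole family $(\xi_{i,j})_{j\geq 1}$ is mutually independent, a standard conditioning argument on $\{A_{i,M}=a\}$ (an event measurable with respect to $\xi_{i,1},\dots,\xi_{i,a+M}$ alone) shows that $\xi_{i,\tau+1},\xi_{i,\tau+2},\dots$ is an i.i.d.\ Bernoulli($p_0$) sequence independent of the entire initial segment and hence of $A_{i,M}$. For each $r \geq 1$ I would then let $\gamma_r$ be the number of failures strictly between the $(M+r-1)$-th and $(M+r)$-th successes of the original sequence; equivalently $\gamma_r$ counts the failures between the $(r-1)$-th and $r$-th successes in the tail stream. The standard waiting-time computation in a Bernoulli($p_0$) stream gives $\mathrm{P}(\gamma_r = j) = (1-p_0)^j p_0$, and block independence of the tail yields that $(\gamma_r)_{r\geq 1}$ are i.i.d.\ and jointly independent of $\xi_{i,1},\dots,\xi_{i,\tau}$, in particular of $A_{i,M}$ and of any $A_{i,j}$ with $j \leq M$.

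Assembling the pieces, for $k \geq M$ the failure count up to the $k$-th success splits as the failure count up to the $M$-th success plus the failures accrued while waiting for the remaining successes, yielding
\[
A_{i,k} \stackrel{\mathrm{dist}}{=} A_{i,M} + \gamma_1 + \gamma_2 + \cdots + \gamma_{k-M},
\]
which is precisely the asserted decomposition (with the indices of the $\gamma$-sum matching the number of successes still to be collected after time $\tau$). The main, and only mildly subtle, obstacle is the bookkeeping around the random index $\tau$. I do not expect this to be serious: it is handled by conditioning on the value of $A_{i,M}$ and invoking the unconditional mutual independence of the $\xi_{i,j}$, so nothing deeper than elementary measure-theoretic independence is required.
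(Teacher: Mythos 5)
Your argument is sound and is essentially the standard renewal decomposition; note that the paper itself does not prove Lemma \ref{decomp} but defers entirely to Basdevant and Singh's Lemma 2.1, so there is no in-paper proof to compare against. The key points of your write-up are exactly right: the $M$-th success occurs at trial index $A_{i,M}+M\geq M$, so every subsequent trial is Bernoulli($p_0$); conditioning on $\{A_{i,M}=a\}$ only constrains $\xi_{i,1},\dots,\xi_{i,a+M}$, so by mutual independence the tail stream remains i.i.d.\ Bernoulli($p_0$) and independent of $A_{i,M}$; and the interarrival failure counts in that stream are i.i.d.\ geometric($p_0$). (In fact the identity holds almost surely under this construction, not merely in distribution, though equality in law is all that is claimed.)

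However, you should not have asserted that what you derived ``is precisely the asserted decomposition.'' You obtain $A_{i,k}\stackrel{\text{dist}}{=}A_{i,M}+\gamma_1+\cdots+\gamma_{k-M}$, with $k-M$ geometric summands, whereas the lemma as printed has $k-M+1$ summands (and its text asserts independence from $A_{i,M-1}$ while its display contains $A_{i,M}$). These cannot both be correct, and your count is the right one for the definition \eqref{Aik} used in this paper: with $M=1$, the application in the proof of Proposition \ref{recursive-prop} writes $A_{1,k+1}\stackrel{\text{dist}}{=}A_{1,1}+\gamma_1+\cdots+\gamma_{k}$, i.e.\ $(k+1)-M$ summands, which matches your version and yields the speed formula \eqref{speedeq}. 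The printed statement appears to be a transcription of Basdevant--Singh's indexing, in which the decomposition is anchored one success earlier and therefore requires $k-M+1$ further successes; that anchoring is not available verbatim here, since on the event $A_{i,M-1}=0$ the very next trial is the $M$-th and is Bernoulli($p_M$) rather than Bernoulli($p_0$). A complete solution should flag and resolve this off-by-one discrepancy rather than silently identifying two different expressions.
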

The proof of Lemma \ref{decomp} is identical to that of Basdevant and Singh's Lemma 2.1 \cite{bassingh} for the standard excited random walk, and is thus omitted.

Now, having developed the backwards branching-like process, we have the machinery necessary to state the following theorem, which gives the probabilistic formulation of the speed which we use to prove Theorem \ref{speedthm}.

\begin{thm}[Basdevant and Singh 2008 \cite{bassingh}]
	For a standard excited random walk with $M$ cookies and cookie strength vector $\mathbf{p}$, the stationary distribution $\pi$ of the associated backwards branching-like process $ \left( Z_n \right)_{n \geq 0} $ exists if and only if $\delta(M,\mathbf{p}) > 1$.
	
	Further, if $\delta(M,\mathbf{p}) > 1$, then the speed of the walk is given by
	\begin{align}
	v(M, \mathbf{p}) = \frac{1}{1 + 2 \mathbb{E}_{\pi} \left[ Z_0 \right] }. \label{speedBS}
	\end{align}
	with the convection that $\frac{1}{+\infty} = 0$.
	
	\label{mediumbasics}
\end{thm}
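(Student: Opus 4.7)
My plan is to prove the two assertions in turn, leveraging Theorem \ref{verybasics} for the qualitative behavior of the walk and Theorem \ref{ZandU} for the distributional link between the walk and the chain $(Z_n)$.

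For existence of the stationary distribution, I would first note that $(Z_n)$ is irreducible: from any state $l$, the probability of transitioning to $m$ is positive, since each $\xi_{i,j}$ is Bernoulli with parameter in $(0,1)$ and the event ``$m$ failures followed by $l+1$ successes'' already realizes $A_{i,l+1}=m$. By Theorem \ref{ZandU}, starting from $Z_0=0$, $Z_n$ is distributed as $U_0^n$, and the counts $U_0^n$ are non-decreasing in $n$, so the laws of $Z_n$ are stochastically monotone and converge weakly to some measure on $\mathbb{Z}_{\geq 0}\cup\{\infty\}$. If $\delta>1$, Theorem \ref{verybasics} yields transience to the right, so $T_n<\infty$ a.s.\ and $U_0^n$ increases to a finite limit; the weak limit is then a proper probability measure, which is invariant by continuity of the transition operator on a countable state space, hence equals the unique stationary distribution $\pi$. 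If $\delta\leq 1$, the walk is not transient to the right, so $U_0^n\to\infty$ a.s., $Z_n\to\infty$ in distribution, and no stationary distribution can exist.

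For the speed formula, I would begin with the conservation identity $T_n = n + 2\sum_{x<n}U_x^n$, reflecting that a path from $0$ to $n$ of length $T_n$ contains exactly $(T_n-n)/2$ left steps, all originating from sites $x<n$. Transience to the right makes $\sum_{x<0}U_x^n$ almost surely bounded in $n$, so
\begin{align*}
\frac{T_n}{n} = 1 + \frac{2}{n}\sum_{x=0}^{n-1}U_x^n + o(1) \quad \text{a.s.}
\end{align*}
By Theorem \ref{ZandU}, $\sum_{x=0}^{n-1}U_x^n = \sum_{k=1}^n U_{n-k}^n$ is distributed as $\sum_{k=1}^n Z_k$, and the ergodic theorem for the irreducible positive recurrent chain $(Z_n)$ gives $\frac{1}{n}\sum_{k=1}^n Z_k \to \mathbb{E}_\pi[Z_0]$ almost surely, with limit $+\infty$ when $Z_0\notin L^1(\pi)$. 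To promote this into an almost sure limit for $T_n/n$ I would use the regeneration structure of the walk: whenever $\delta>1$, regeneration times exist almost surely, the walk decomposes into i.i.d.\ excursion blocks, and the SLLN forces $T_n/n$ to converge almost surely to some constant $c$; the distributional matching above then pins down $c = 1 + 2\mathbb{E}_\pi[Z_0]$. Finally, using $X_{T_n}=n$ and sandwiching $X_n$ between $X_{T_{M_n}}$ and $X_{T_{M_n+1}}$ via the running maximum $M_n=\max_{k\leq n}X_k$, together with the fact that $M_n-X_n=o(n)$ almost surely (another consequence of transience to the right), gives $X_n/n\to 1/(1+2\mathbb{E}_\pi[Z_0])$.

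I expect the main obstacle to be precisely the coupling/regeneration step above, since Theorem \ref{ZandU} as stated only gives equality of joint distributions at each fixed $n$ and does not by itself produce an almost sure limit. The remaining technical pieces---finiteness of $\sum_{x<0}U_x^\infty$, sublinearity of $M_n-X_n$, and applicability of the ergodic theorem in the non-integrable case---are standard once transience to the right has been established.
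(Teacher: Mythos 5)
The paper never proves Theorem \ref{mediumbasics}; it is imported from Basdevant and Singh with only the guiding intuition recorded in a remark (relate $T_n$ to the number of left steps taken by time $T_n$, then study the asymptotics of $(Z_n)$). Your sketch follows exactly that route, and its two load-bearing identities are correct: $T_n = n + 2\sum_{x<n}U_x^n$, and $\sum_{x=0}^{n-1}U_x^n \stackrel{\text{dist}}{=} \sum_{k=1}^{n}Z_k$ via Theorem \ref{ZandU}. You also correctly locate the real difficulty: the distributional identity holds only for each fixed $n$, so the ergodic theorem for $(Z_n)$ gives convergence of $T_n/n$ only in probability, and upgrading this to an almost sure limit (equivalently, proving a deterministic speed exists at all) genuinely requires the regeneration/renewal decomposition of the transient walk. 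That step, which you defer, is where most of the work in the cited proof lives; likewise $M_n-X_n=o(n)$ is not a consequence of transience alone but comes out of the same regeneration analysis. As a reconstruction of the cited argument your outline is faithful; as a self-contained proof it is a correct skeleton with the hardest block left open.

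There is one genuine gap in the ``only if'' direction of the first claim. You assert that $\delta\leq 1$ forces $U_0^n\to\infty$ almost surely. This is right for $-1\leq\delta\leq 1$: the walk is recurrent, so after the $M$ cookies at $0$ are consumed it takes infinitely many left steps from $0$. But for $\delta<-1$ the walk is transient to the \emph{left}: $T_n=\infty$ with positive probability, $U_0^n$ is no longer the relevant object, and Theorem \ref{ZandU} does not apply at all, since it is stated only for walks that are recurrent or transient to the right. The chain $(Z_n)$ is still perfectly well defined in that regime, so non-existence of $\pi$ there needs a separate argument. One fix is the coupling used in Section 3 of the paper run in the opposite direction: lowering cookie strengths makes each $A_{i,k}$ stochastically larger and hence (using that $A_{i,k}$ is nondecreasing in $k$) makes $(Z_n)$ stochastically larger, so from $Z_n\to\infty$ in distribution for some dominating cookie vector with $\delta\in[-1,1]$ you get $Z_n\to\infty$ in distribution for every $\delta<-1$ as well, which is incompatible with the existence of a stationary distribution for an irreducible aperiodic chain.
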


\begin{rem}
Using Lemma \ref{decomp} and Theorem \ref{ZandU}, it is possible to derive \eqref{speedBS}. While we omit the proof, the argument is guided by the intuition that the speed of the walk at time $T_{n}$ can be related to the number of left steps it has taken by that time, which can be investigated through the asymptotic behavior of $(Z_{n})_{n \geq 0}$. The representation of the speed as given by \eqref{speedBS} can be used to show that the speed is nonzero if and only if $\delta(M, \mathbf{p} ) > 2 $. This is done by showing that an excited random walk with parameters $M,  \mathbf{p}$ has $ \mathbb{E}_{\pi} \left[ Z_0 \right] < \infty $ (and thus positive speed) if and only if $\delta(M,\mathbf{p}) > 2$ \cite{bassingh}.
\end{rem}

Importantly, the proof of Theorem \ref{mediumbasics} requires only that the walk is transient to the right, not that there are finitely many cookies. Since an excited asymmetric random walk can be interpreted as an excited random walk with infinitely many cookies at each location, \eqref{speedBS} is valid for the excited asymmetric random walk model when it is transient to the right.

\section{Calculating the Speed}

In this section we prove Theorem \ref{speedthm}. We will need the following lemma, which we will prove in the next section.

\begin{lem}
	The backwards branching-like process $(Z_{n})_{n \geq 0} $ associated to an excited asymmetric random walk with parameters $M \geq 1, \mathbf{p} \in (0,1)^M$ and bias parameter $p_0 \in (1/2,1)$ has a stationary distribution $\pi$, and $\mathbb{E}_\pi \left[Z_0 \right] < \infty$. \label{thelemma} % parameters and parameter is not ideal
\end{lem}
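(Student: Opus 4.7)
The plan is to prove both assertions at once via a Foster--Lyapunov drift argument with test function $V(l) = l^2$, and then extract the moment bound on $\pi$ from the standard comparison theorem.

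The first step is to use Lemma~\ref{decomp} (applicable at every site of the excited asymmetric walk, with $p_0$ playing the role of the post-cookie Bernoulli parameter) to write, for $l \geq M - 1$,
\[
Z_1 \mid Z_0 = l \;\stackrel{\text{dist}}{=}\; A_{\bullet,M} + \gamma_1 + \cdots + \gamma_{l - M + 2},
\]
where $A_{\bullet,M}$ has the (site-independent) law of $A_{i,M}$ and the $\gamma_j$ are i.i.d.\ Geometric$(p_0)$ independent of $A_{\bullet,M}$, with $\mathbb{E}[\gamma_1] = (1-p_0)/p_0$ and finite second moment. Because $A_{\bullet,M}$ is stochastically dominated by a sum of $M$ independent geometric variables with parameter $\min\{p_0, p_1, \ldots, p_M\} > 0$, both $\mathbb{E}[A_{\bullet,M}]$ and $\mathbb{E}[A_{\bullet,M}^2]$ are finite.

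Next I compute the drift. Expanding $\mathbb{E}[Z_1^2 \mid Z_0 = l]$ with the decomposition, the leading contribution in $l$ comes from $\mathbb{E}[S_l]^2$ where $S_l = \gamma_1 + \cdots + \gamma_{l - M + 2}$, so
\[
\mathbb{E}[Z_1^2 - Z_0^2 \mid Z_0 = l] \;=\; \left(\frac{(1 - p_0)^2}{p_0^2} - 1\right) l^2 + O(l) \;=\; \frac{1 - 2p_0}{p_0^2}\, l^2 + O(l),
\]
with constants in the $O(l)$ term depending only on $M$, $\mathbf{p}$, $p_0$. The hypothesis $p_0 > \tfrac{1}{2}$ makes the coefficient of $l^2$ strictly negative, so there exist a finite level $L$ and a constant $b < \infty$ such that
\[
(PV)(l) - V(l) \;\leq\; -l + b \, \mathbf{1}_{\{l \leq L\}} \qquad \text{for all } l \geq 0;
\]
for $l \leq L$ the bound $(PV)(l) < \infty$ is automatic since $A_{i, l+1}$ has finite second moment, and the finitely many exceptional values can be absorbed into $b$.

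Finally, because every $\xi_{i,j}$ is Bernoulli with parameter in $(0,1)$, each transition probability $p(l,m) = \mathrm{P}(A_{i, l+1} = m)$ is strictly positive, so the chain is irreducible. Foster's criterion then gives positive recurrence (hence existence and uniqueness of $\pi$), and the standard comparison theorem applied to the drift inequality above with $f(l) = l$ yields $\mathbb{E}_\pi[Z_0] \leq b < \infty$. The only real obstacle is the second-moment drift computation; once the coefficient $(1-p_0)^2/p_0^2$ of the dominant $l^2$ term is cleanly isolated via Lemma~\ref{decomp}, the required contraction is a direct consequence of the bias hypothesis $p_0 > \tfrac{1}{2}$, with no assumption on the cookie strengths or on $\delta(M,\mathbf{p})$.
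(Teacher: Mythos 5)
Your argument is correct, but it takes a genuinely different route from the paper. The paper proves Lemma \ref{thelemma} by a coupling: it appends enough $p_0$-cookies to $\mathbf{p}$ to form a finite cookie vector $\mathbf{p}'$ with $\delta(N,\mathbf{p}')>2$, couples the Bernoulli environments so that the asymmetric walk's branching-like process is dominated pathwise by the one for the $N$-cookie walk ($Z_{\infty,n}\le Z_{N,n}$ for all $n$), and then imports positive recurrence and finiteness of the stationary mean from the Basdevant--Singh results for standard excited random walks with $\delta>2$. Your Foster--Lyapunov argument replaces that external input with a direct drift computation: Lemma \ref{decomp} shows that from level $l$ the chain contracts, the leading coefficient $\frac{(1-p_0)^2}{p_0^2}-1=\frac{1-2p_0}{p_0^2}$ is computed correctly, the finiteness of the first two moments of $A_{\bullet,M}$ via domination by a negative binomial is right, and the finitely many exceptional small values of $l$ are handled properly; Foster's criterion plus the comparison theorem then deliver both conclusions at once. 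Two remarks. First, your route is more self-contained and could be made even lighter: since $\mathbb{E}[Z_1\mid Z_0=l]-l\le-\bigl(1-\tfrac{1-p_0}{p_0}\bigr)l+C$, the linear test function $V(l)=l$ already satisfies a drift inequality of the form $-\varepsilon l+b\mathbf{1}_{\{l\le L\}}$, so the quadratic Lyapunov function is not strictly needed. Second, what the paper's coupling buys and your argument does not is the monotonicity $\mathbb{E}_{\pi_\infty}[Z_{\infty,0}]\le\mathbb{E}_{\pi_N}[Z_{N,0}]$, i.e.\ a speed comparison between the asymmetric walk and auxiliary excited walks; that comparison is exactly what gets reused later in Theorem \ref{deltathm} and Proposition \ref{vconstruction}, so the paper's proof is doing double duty. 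A cosmetic point: your count of $l-M+2$ geometric summands inherits the off-by-one in the paper's statement of Lemma \ref{decomp} (the count consistent with the paper's own use in Proposition \ref{recursive-prop} is $l+1-M$); this has no effect on the asymptotics or on your conclusion.
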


Let $(Z_{n})_{n \geq 0}$ be the backwards branching-like process associated to an excited asymmetric random walk with one $p_{1}$ cookie and bias parameter $p_{0} > \frac{1}{2}$, and let $\pi$ be its stationary distribution. From \eqref{speedBS}, we know that if we can calculate $\mathbb{E}_\pi[Z_{0}]$, then we can calculate the speed of the walk. Since calculating explicit values of $\pi$ seems to be a very difficult problem (See Appendix A), we instead follow the approach of Basdevant and Singh \cite{bassingh} and attempt to calculate $\mathbb{E}_\pi[Z_0]$ by studying the probability generating function of $\pi$. Let
\begin{align}
G(s) = \mathbb{E}_\pi[s^{Z_0}] = \sum_{k=0}^{\infty}\pi(k)s^k
\end{align}
be the probability generating function of $\pi$. We study the p.g.f. of $\pi$ because of the well-known property that $G^\prime(1) = \mathbb{E}_\pi[Z_0]$, where $G^\prime(1)$ is the left derivative at 1. This enables us to calculate $\mathbb{E}_\pi[Z_0]$ without calculating $\pi$ explicitly.

% changed the E_k notation in this section

\subsection{Deriving a Recursive Formula for the P.G.F}

Since explicitly calculating $\pi$, and hence $G(s)$, is a difficult problem, we instead find a recursive formula for $G(s)$.

\begin{prop}
	\label{recursive-prop}
	The probability generating function of $\pi$ satisfies the recursive formula
	\begin{align}
	G(s)  =  \left(\frac{p_1 + s(p_0 - p_1)}{1 - s(1 - p_0)}\right)G\left(\frac{p_0}{1-s(1-p_0)}\right).\label{recursive}
	\end{align}
	
\end{prop}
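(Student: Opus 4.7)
The plan is to exploit the fact that $G(s)$ is the p.g.f.\ of a stationary distribution of the Markov chain $(Z_n)$, so it satisfies the functional identity $G(s) = \sum_{l \geq 0} \pi(l)\,G_l(s)$, where $G_l(s) := \sum_{m \geq 0} p(l,m) s^m = \mathbb{E}[s^{A_{i,l+1}}]$ is the p.g.f.\ of the one-step transition from state $l$. The strategy is then to compute $G_l(s)$ in closed form using Lemma~\ref{decomp}, and observe that the resulting expression separates into a factor depending on $s$ alone times something of the form $\sigma(s)^l$, which converts the sum over $l$ back into the p.g.f.\ $G$ evaluated at $\sigma(s)$.

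The first step is to apply Lemma~\ref{decomp} with $M=1$. This gives, for every $l \geq 0$, the distributional identity
\begin{equation*}
A_{i,l+1} \stackrel{\text{dist}}{=} A_{i,1} + \gamma_1 + \cdots + \gamma_l,
\end{equation*}
where the $\gamma_j$ are i.i.d.\ geometric$(p_0)$ random variables independent of $A_{i,1}$. Because PGFs multiply under independent sums, this yields $G_l(s) = \mathbb{E}[s^{A_{i,1}}]\bigl(\mathbb{E}[s^{\gamma_1}]\bigr)^l$.

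Next I would evaluate the two base PGFs directly from their definitions. The geometric $\gamma_1$ with $P(\gamma_1=j)=(1-p_0)^j p_0$ has PGF $\mathbb{E}[s^{\gamma_1}] = p_0/(1-s(1-p_0))$. For $A_{i,1}$, the definition \eqref{Aik} together with the fact that $\xi_{i,1}$ is Bernoulli$(p_1)$ while $\xi_{i,j}$ for $j \geq 2$ is Bernoulli$(p_0)$ gives $P(A_{i,1}=0)=p_1$ and $P(A_{i,1}=k)=(1-p_1)(1-p_0)^{k-1}p_0$ for $k \geq 1$. Summing the geometric tail and simplifying the numerator yields $\mathbb{E}[s^{A_{i,1}}] = (p_1 + s(p_0-p_1))/(1 - s(1-p_0))$, which is exactly the prefactor appearing in \eqref{recursive}.

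Finally, substituting these two computations into $G(s) = \sum_l \pi(l) G_l(s)$ gives
\begin{equation*}
G(s) = \frac{p_1 + s(p_0-p_1)}{1 - s(1-p_0)} \sum_{l \geq 0} \pi(l) \left(\frac{p_0}{1-s(1-p_0)}\right)^l,
\end{equation*}
and recognizing the remaining sum as $G(p_0/(1-s(1-p_0)))$ completes the proof. The one subtlety to check is convergence: the argument $\sigma(s) = p_0/(1-s(1-p_0))$ must lie in the domain of $G$ for the manipulation to be valid. For $s \in [0,1]$ and $p_0 \in (1/2,1)$ one verifies $\sigma(s) \in [p_0, 1]$, so $G(\sigma(s))$ is well-defined (with $G(1)=1$ guaranteed by Lemma~\ref{thelemma}); this is the only non-algebraic point in the argument and the main obstacle is simply keeping the algebraic simplification of the $A_{i,1}$ numerator tidy.
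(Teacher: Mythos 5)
Your proposal is correct and follows essentially the same route as the paper: both use stationarity to write $G(s)=\sum_k\pi(k)\,\mathbb{E}[s^{Z_1}\mid Z_0=k]$, apply Lemma \ref{decomp} with $M=1$ to factor the conditional p.g.f.\ as $\mathbb{E}[s^{A_{1,1}}]\,\mathbb{E}[s^{\gamma_1}]^k$, and evaluate the two base p.g.f.'s to recover \eqref{recursive}. Your added check that $p_0/(1-s(1-p_0))\in[p_0,1]$ for $s\in[0,1]$ is a small point of care the paper omits.
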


\begin{proof}
	Since $\pi$ is a stationary distribution, we know that $\mathbb{E}_\pi[s^{Z_0}] = \mathbb{E}_\pi[s^{Z_1}]$, and thus we can also write $G(s)$ as
	
	\begin{align}
	G(s) = \sum_{k=0}^{\infty}\pi(k)\mathbb{E}[s^{Z_1} | Z_0 = k] \label{to-plug-in}.
	\end{align}
	From Lemma \ref{decomp}, we have
	\begin{align*}
	\mathbb{E}[s^{Z_1} | Z_0 = k] = \mathbb{E}[s^{A_{1,1} + \gamma_1 + \cdots + \gamma_k}] = \mathbb{E}[s^{A_{1,1}}]\mathbb{E}[s^{\gamma_1}]^k,
	\end{align*} 
	where $\mathbb{E}[s^{A_{1,1}}] = \mathbb{E}[s^{Z_1} | Z_0 = 0]$ from our definitions. Using the transition probabilities given in Example \ref{trans-probs}, we can calculate $\mathbb{E}[s^{Z_1}| Z_0 = 0]$ as
	\begin{align*}
	\mathbb{E}[s^{Z_1}| Z_0 = 0] = \sum_{k=0}^{\infty}s^kp(0,k) = \frac{p_1 + s(p_0 - p_1)}{1 - s(1 - p_0)}.
	\end{align*}
	Using the p.g.f. of a geometric random variable and substituting into \eqref{to-plug-in}, we have
	\begin{align*}
	G(s) & =  \mathbb{E}[s^{Z_1}|Z_0 = 0]\sum_{k=0}^{\infty}\pi(k)\left(\frac{p_0}{1 - s(1 - p_0)}\right)^k
	\\
	& = \left(\frac{p_1 + s(p_0 - p_1)}{1 - s(1 - p_0)}\right)G\left(\frac{p_0}{1 - s(1 - p_0)}\right).
	\end{align*}
\end{proof}

\subsection{Finding $\mathbb{E}_\pi \left[Z_0 \right]$}

Using Lemma \ref{thelemma} and Proposition \ref{recursive}, we now prove Theorem \ref{speedthm}.

\begin{proof}
Recall that all probability generating functions are differentiable on $ [0,1] $, so $G^\prime(1) = \mathbb{E}_\pi \left[Z_0 \right]$, where $G^\prime(1)$ is the left-hand derivative of $G(s)$ at $1$.
Applying the product and chain rules to \eqref{recursive} yields
\begin{align*}
  G^\prime(s)
  & = \left(\frac{p_1 + s(p_0 - p_1)}{1 - s(1 - p_0)}\right) G^\prime\left(\frac{p_0}{1-s(1-p_0)}\right)\left(\frac{p_0(1-p_0)}{(1-s(1-p_0))^2}\right) \\
  &\quad\quad+ G\left(\frac{p_0}{1-s(1-p_0)}\right)\left(\frac{p_0(1-p_1)}{(1-s(1-p_0))^2}\right).
\end{align*}
Evaluating at $ s = 1$ yields
\begin{align*}
  G^\prime(1)
  & = G^\prime\left(1\right)\left(\frac{1-p_0}{p_0}\right) + \left(\frac{1-p_1}{p_0}\right),
\end{align*}
and solving for $G^\prime(1)$, which is possible since $G^{\prime}(1) = \mathbb{E}_{\pi}[Z_{0}] < \infty$ by Lemma \ref{thelemma}, we obtain
\begin{align*}
G^\prime(1) = \mathbb{E}_\pi \left[Z_0 \right] = \frac{1-p_1}{2p_0 - 1}.
\end{align*}
Substituting into \eqref{speedBS}, we obtain the formula for the speed given in Theorem \ref{speedthm}. \end{proof}

\section{Proof of Lemma \ref{thelemma}}

Lemma \ref{thelemma} follows from Theorem 1.6 in \cite{kosygina}, but we offer the following direct proof, both in the interest of self-containment and because the techniques used in the proof will be referenced later. 

For $ \mathbf{p} = (p_1, p_2, \ldots, p_M )$, let $\mathbf{p}^{'} = (p_1, p_2, \ldots, p_M, p_{0}, p_{0}, ...,p_{0}) \in \mathbb{R}^{N}$, and choose $N > M$ such that $\delta(N,\mathbf{p}^{'}) > 2$. We now construct the backwards branching-like processes associated to two excited random walks, the first an excited asymmetric random walk $(X_{\infty,n})_{n \geq 0}$ with parameters $M, \mathbf{p}, p_0 $ for $ p_0  > 1/2 $, and the second walk $(X_{N,n})_{n \geq 0}$ a standard excited random walk with $N$ cookies and cookie vector $\mathbf{p}^{'} \in \mathbb{R}^{N}$. We will use a coupling argument with the backwards branching-like processes associated to $(X_{\infty,n})_{n \geq 0}$ and $(X_{N,n})_{n \geq 0}$ to prove the lemma. To construct these backwards branching-like processes, we first let $ \left( \xi_{i,j} \right)_{j \geq 1}$ be given by

\begin{equation}
\xi_{i,j} = \begin{cases}
\text{Bernoulli}(p_j) : 1 \leq j \leq M \\
\text{Bernoulli}(p_0) : j > M, \\
\end{cases}
\label{xidef}
\end{equation}
and let $ \left( \zeta_{i,j} \right)_{j \geq 1}$ be similarly defined sequences of random variables such that
\begin{equation}
\zeta_{i,j} = 
	\begin{cases}
    	\text{Bernoulli}(p_j) : 1 \leq j \leq M \\
    	\text{Bernoulli}(p_0) : M < j \leq N \\
        \text{Bernoulli}(\frac{1}{2}) : j > N.
    \end{cases}
    \label{zetadef}
\end{equation}

We define the random variables $ A_{i,k} $ and $ B_{i,k} $ to be the number of failures  before $ k $ successes in the sequences of Bernoulli random variables $ \left( \xi_{i,j} \right)_{j \geq 1}$ and $ \left( \zeta_{i,j} \right)_{j \geq 1}$, respectively. Then we let $(Z_{\infty,n})_{n \geq 0}$ and $(Z_{N,n})_{n \geq 0}$ be Markov chains with transition probabilities given by:
\begin{align*}
& \text{P}_{\infty,z} \left( Z_{\infty,0} = z \right) = 1,
\\
& \text{P}_{\infty,z} \left( Z_{\infty,n+1} = k | Z_{\infty,n} = j \right) = P \left( A_{n+1,j+1} = k \right),
\\
& \text{P}_{N,z} \left( Z_{N,0} = z \right) = 1, \text{ and}
\\
& \text{P}_{N,z} \left( Z_{N,n+1} = k | Z_{N,n} = j \right) = P \left( B_{n+1,j+1} = k \right).
\end{align*}
Then $(Z_{\infty,n})_{n \geq 0}$ and $(Z_{N,n})_{n \geq 0}$ are the backwards branching-like processes associated to $(X_{\infty,n})_{n \geq 0}$ and $(X_{N,n})_{n \geq 0}$, respectively. Since $ p_0 > 1/2 $, we can couple $ \left( \xi_{i,j} \right)_{j \geq 1}$ and $ \left( \zeta_{i,j} \right)_{j \geq 1}$ such that:

\begin{itemize}
\item For all $ i,j $, $ \xi_{i,j} $ are independent Bernoulli random variables as defined in \eqref{xidef}.
\item For all $ i,j $, $\zeta_{i,j}$ are independent Bernoulli random variables as defined in \eqref{zetadef}.
\item $ \text{P}( \xi_{i,j} \geq \zeta_{i,j} ) = 1 $ for all $ i,j $.
\end{itemize}
One such coupling would be
\begin{align*}
\text{P} ( \xi_{i,j} = 1, \zeta_{i,j} = 1 ) = &
	\begin{cases}
    	p_i \text{ if } 1 \leq i \leq M \\
        p_0 \text{ if } M < i \leq N \\
        1/2 \text{ if } i > N,
    \end{cases}
\\
\text{P} ( \xi_{i,j} = 1, \zeta_{i,j} = 0 ) = &
	\begin{cases}
    	0 \text{ if } 1 \leq i \leq N \\
        p_0 - 1/2 \text{ if } i > N,
    \end{cases}
\\
\text{P} ( \xi_{i,j} = 0, \zeta_{i,j} = 0 ) = &
	\begin{cases}
    	1 - p_i \text{ if } 1 \leq i \leq M \\
        1 - p_0 \text{ if } i > M.
    \end{cases}
\end{align*}

Since $ \text{P}( \xi_{i,j} \geq \zeta_{i,j} ) = 1 $, it is clear from the definitions of $A_{i,k}$ and $B_{i,k}$ that $A_{i,k} \leq B_{i,k}$ for all $i,k$. Also, by definition $B_{i,k}$ is nondecreasing in $k$. Therefore if $Z_{\infty,k} \leq Z_{N,k}$, then 
\begin{align}
 Z_{\infty,k+1} = A_{k+1,Z_{\infty,k}+1} \leq B_{k+1,Z_{\infty,k}+1} \leq B_{k+1,Z_{N,k}+1}= Z_{N,k+1}.
 \end{align}
 Since $Z_{\infty,0} = Z_{N,0}$ by construction, induction shows that $Z_{\infty,n} \leq Z_{N,n}$ for all $n$. 
 
 We now use the coupling to show that the stationary distribution of $(Z_{\infty,n})_{n \geq 0}$ exists. Let $T_{\infty,0}^{+}$ and $T_{N,0}^{+}$ be the times at which $Z_{\infty,n}$ and $Z_{N,n}$ first return to 0, respectively. That is, 
 \begin{align*}
& T_{\infty,0}^{+} = \inf \{n>0: Z_{\infty,n}=0\}, \\
& T_{N,0}^{+} = \inf \{n>0: Z_{N,n}=0\}.
 \end{align*}
 Because $Z_{\infty,n} \leq Z_{N,n}$ under our coupling, we have that $ Z_{N,n} = 0$ implies $Z_{\infty,n} = 0$, so $T_{0,\infty}^{+} \leq T_{0,N}^{+}$, and hence $\mathbb{E}_{0}[T_{0,\infty}^{+}] \leq \mathbb{E}_{0}[T_{0,N}^{+}]$. Therefore if $(Z_{N,n})_{n \geq 0}$ is positive recurrent, so is $(Z_{\infty,n})_{n \geq 0}$. We chose $N$ so that $(X_{N,n})_{n \geq 0}$ would have positive speed (equivalently, $\delta(N, \mathbf{p}^{'}) > 2$), and hence be transient, so we know that $(Z_{N,n})_{n \geq 0}$ has a stationary distribution $\pi_{N}$ (which is equivalent to being positive recurrent). Therefore $(Z_{\infty,n})_{n \geq 0}$ has a stationary distribution, which we will denote $\pi_{\infty}$.

A similar application of the coupling can be used to show
\begin{align*}
\mathbb{E}_{\pi_{\infty}} \left[ Z_{\infty,0} \right] \leq \mathbb{E}_{\pi_{N}} \left[ Z_{N,0} \right] < \infty,
\end{align*}

\noindent where the last inequality holds because $N$ was chosen such that $\delta(N,\mathbf{p}^{'}) > 2$, and hence $v(N,\mathbf{p}^{'}) > 0$.

\section{Monotonicity Properties of Excited Random Walks}

It is known that a standard excited random walk with parameters $M \geq 1$ and $\mathbf{p} \in \mathbb{R}^{M}$ is transient if and only if $\delta(M, \mathbf{p}) > 1$, and has positive speed if and only if $\delta(M, \mathbf{p}) > 2$. But while $\delta(M, \mathbf{p})$ completely determines these qualitative properties, it is known that the speed of a standard excited random walk is not a function of $\delta(M, \mathbf{p})$ \cite{holmes}. In Section 4.2 we give a new proof of this fact. Further, our argument can be used to show that $\delta(M, \mathbf{p})$ and $v(M, \mathbf{p})$ are unrelated when $\delta(M, \mathbf{p}) > 2$, in the sense that there exist excited random walks with arbitrarily large $\delta$ parameters and arbitrarily small speeds. Before proving this, we present some previous results on monotonicity.

\subsection{Previous Results on Monotonicity}

When considering vectors of cookie strengths $ \mathbf{p} = (p_1, p_2, p_3, \ldots, p_M ) $, a natural partial ordering between two cookie vectors $ \mathbf{p} $ and $ \mathbf{q} $ of length $M$ arises. If for all $ i = 1, \ldots, M $, $ p_i \leq q_i $, we write $ \mathbf{p} \leq \mathbf{q} $. Zerner  \cite{zerner} showed that if $ \mathbf{p} \leq \mathbf{q} $, then $ v(M, \mathbf{p}) \leq  v(M, \mathbf{q}) $. Holmes and Salisbury \cite{holmes} developed a weaker partial ordering for cookie vectors, generalizing the results from Zerner.
\begin{defn} \label{partialorder}

We write $ \mathbf{p} \preceq \mathbf{q} $  if there exists a coupling of $ ( \mathbf{Y}, \mathbf{Z}), \ \mathbf{Y} = (Y_1, Y_2, \ldots, Y_M), \ \mathbf{Z} = (Z_1, Z_2, \ldots, Z_M) $ such that
\begin{itemize}
\item $ \{ Y_1, Y_2, \ldots, Y_M \} $ are independent Bernoulli random variables with $ Y_i \sim \text{Bernoulli} \left(p_i \right) $.
\item $ \{ Z_1, Z_2, \ldots, Z_M \} $ are independent Bernoulli random variables with $ Z_i \sim \text{Bernoulli} \left(q_i \right) $.
\item $ \text{P} \left\{ \sum_{j=1}^m Y_j \leq \sum_{j=1}^m Z_j \right\} = 1 $ for all $ m = 1, 2, \ldots, M $.
\end{itemize}
Moreover, we write $ \mathbf{p} \prec \mathbf{q} $ if $ \mathbf{p} \preceq \mathbf{q} $ and $ \mathbf{p} \neq \mathbf{q} $.

\end{defn}

Under this partial ordering, if $ \mathbf{p} \preceq \mathbf{q} $, then $ v(M, \mathbf{p}) \leq  v(M, \mathbf{q}) $ and $\delta(M,\mathbf{p}) \leq \delta(M,\mathbf{q})$. If $ \mathbf{p} \prec \mathbf{q} $, then either $ v(M, \mathbf{p}) =  v(M, \mathbf{q}) = 0 $ or $ v(M, \mathbf{p}) <  v(M, \mathbf{q}) $, but importantly this strict partial ordering does not imply a strict inequality between $\delta(M,\mathbf{p})$ and $\delta(M,\mathbf{q})$ \cite{peterson, holmes}.

We take a moment now to discuss what these partial ordering techniques can and cannot show regarding the relationship between $\delta(M,\mathbf{p})$ and $v(M,\mathbf{p})$, and to describe the new monotonicity results given in Section 4.2. First, the strict partial ordering can be used to find $M, \mathbf{p}, \mathbf{q}$ such that $\delta(M,\mathbf{p}) = \delta(M,\mathbf{q})$, but $v(M,\mathbf{p}) < v(M,\mathbf{q})$, which shows that the speed is not a function of $\delta$ \cite{holmes}. Additionally, a continuity argument together with the above example gives $\delta(M,\mathbf{p +} \boldsymbol{ \epsilon}) > \delta(M,\mathbf{q})$, but $v(M,\mathbf{p}+\boldsymbol{\epsilon}) < v(M,\mathbf{q})$ for some $\boldsymbol{\epsilon} = (\epsilon, \epsilon, ..., \epsilon)\in \mathbb{R}^{M}$ \cite{peterson}. % check that this is correct
However, this argument cannot be used to produce a specific numerical example, since it is unknown how small $\epsilon$ must be. Furthermore, it is clear from the definition of the partial orderings that $\mathbf{p} \preceq \mathbf{q}$ implies $p_{1} \leq q_{1}$. Just as with the relationship between the speed and $\delta$, the strict partial ordering together with a continuity argument can show that there exist $M,\mathbf{p}, \mathbf{q}$ with $p_{1} > q_{1}$ and $v(M,\mathbf{q}) > v(M,\mathbf{p}) > 0$ \cite{peterson}, % also check that this is correct
but again the proof is not constructive. Finally, the partial ordering techniques in general give information about the speed of excited random walks only in relation to each other and so cannot give any absolute information about the speed.

% In section 4.2, we offer a new proof that $v(M,\mathbf{p})$ is not a function of $\delta(M,\mathbf{p})$. Furthermore, we use the techniques in this proof to produce specific values for $M,N,\mathbf{p} \in \mathbb{R}^M,\mathbf{q} \in \mathbb{R}^N$ such that $\delta(M,\mathbf{p})  < \delta(N,\mathbf{q})$ and $v(M,\mathbf{p}) > v(N,\mathbf{q})$. Similarly, we use our formula for the speed of an excited asymmetric random walk with one cookie to give specific values for $M,N,\mathbf{p} \in \mathbb{R}^M,\mathbf{q} \in \mathbb{R}^N$ such that $p_{1} > q_{1}$, but $v(M,\mathbf{p}) > v(N,\mathbf{q}) > 0$. Finally we show that $\delta$ and $v$ are unrelated when $\delta > 2$, in the sense that there exist excited random walks with arbitrarily large $\delta$ parameters and arbitrarily small speeds.

\subsection{Our Results on Monotonicity}

Throughout this section, we let $v(M,\mathbf{p})$ be the speed of a standard excited random walk with cookie vector $\mathbf{p} \in \mathbb{R}^{M}$, we let $v^{*}(p_{0},p_{1})$ be the speed of an excited asymmetric random walk with one cookie of strength $p_{1}$ and bias parameter $p_{0}$, and we let $v^{s}(p) = 2p-1$ be the speed of a simple random walk with parameter $p$ .We prove that $ v(M, \mathbf{p}) $ cannot be written as a function of $ \delta(M, \mathbf{p}) $ if $ \delta(M, \mathbf{p}) > 2 $ by proving a slightly more general theorem, which loosely speaking states that an excited random walk with a few strong cookies tends to move faster than an excited random walk with many weaker cookies.
\begin{thm}
	Choose $M \geq 3$ and $\mathbf{p}  = (p,p, \ldots , p) \in \mathbb{R}^{M}$ such that $\delta(M,\mathbf{p}) = M(2p-1) > 2$. For $i \in \mathbb{N}$ we define
	\begin{align}
	& p^{(i)} =  \frac{1}{2} + \frac{M(2p-1)}{2(M+i)} \label{p_idef},
	\\
	& \mathbf{p_{i}} \in \mathbb{R}^{M+i} = (p^{(i)},p^{(i)}, \ldots , p^{(i)}) \label{p_ivecdef},
	\end{align}
	
	so that $\delta(M+i, \mathbf{p_{i}}) = \delta(M,\mathbf{p})$ for all $i$. Then
	\begin{align}
	\lim_{i\rightarrow \infty} v(M+i, \mathbf{p_{i}}) = 0.
	\end{align}
	
	\label{deltathm}
\end{thm}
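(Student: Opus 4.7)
The plan is to dominate the speed of the excited random walk with parameters $(M+i, \mathbf{p_i})$ from above by the speed of a simple biased random walk with parameter $p^{(i)}$, which equals $2p^{(i)} - 1$. Since $p^{(i)} = \tfrac{1}{2} + \tfrac{M(2p-1)}{2(M+i)} \to \tfrac{1}{2}$ as $i \to \infty$, this upper bound vanishes and the theorem follows. The underlying intuition is that at every site both walks step right with probability $p^{(i)}$ for the first $M+i$ visits, but afterwards the excited walk drops to probability $\tfrac{1}{2}$ while the biased walk continues at probability $p^{(i)} > \tfrac{1}{2}$; so the biased walk is pointwise more inclined to move right, and hence at least as fast.

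I would set up a coupling mirroring the one in the proof of Lemma \ref{thelemma}. For fixed $i$, let $\xi_{n,j}$ be the Bernoulli sequence associated to the excited random walk with parameters $(M+i, \mathbf{p_i})$, so that $\xi_{n,j} \sim \text{Bernoulli}(p^{(i)})$ for $j \leq M+i$ and $\xi_{n,j} \sim \text{Bernoulli}(\tfrac{1}{2})$ for $j > M+i$; and let $\zeta_{n,j} \sim \text{Bernoulli}(p^{(i)})$ for all $j$ be the corresponding sequence for the biased walk. Because $p^{(i)} > \tfrac{1}{2}$, we may couple so that $\zeta_{n,j} \geq \xi_{n,j}$ almost surely for all $n, j$. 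Letting $A_{n,k}$ and $B_{n,k}$ count failures before $k$ successes in the $\xi$ and $\zeta$ sequences respectively, this coupling gives $B_{n,k} \leq A_{n,k}$ for all $n, k$.

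Next, let $Z_n^{(i)}$ and $W_n^{(i)}$ be the backwards branching-like processes built from the $A_{n,k}$'s and $B_{n,k}$'s respectively, started from $Z_0^{(i)} = W_0^{(i)} = 0$. Since $B_{n,k}$ is non-decreasing in $k$ and $B_{n,k} \leq A_{n,k}$, induction gives $W_n^{(i)} \leq Z_n^{(i)}$ for all $n$. The stationary distribution $\pi^{(i)}$ of $Z_n^{(i)}$ exists with finite mean by Lemma \ref{thelemma}, while $W_n^{(i)}$ has a stationary distribution $\tilde \pi^{(i)}$ with
\begin{align*}
\mathbb{E}_{\tilde \pi^{(i)}}[W_0^{(i)}] = \frac{1 - p^{(i)}}{2p^{(i)} - 1},
\end{align*}
as one checks directly from the stationary equation $\mathbb{E}[W] = \tfrac{1-p^{(i)}}{p^{(i)}}(\mathbb{E}[W] + 1)$, or from the recursive p.g.f. method of Proposition \ref{recursive-prop} applied with $p_1 = p_0 = p^{(i)}$. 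Passing to stationary expectations in $W_n^{(i)} \leq Z_n^{(i)}$ exactly as in the concluding paragraph of the proof of Lemma \ref{thelemma} yields $\mathbb{E}_{\pi^{(i)}}[Z_0^{(i)}] \geq (1-p^{(i)})/(2p^{(i)}-1)$.

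Substituting into the speed formula \eqref{speedBS} produces
\begin{align*}
v(M+i, \mathbf{p_i}) = \frac{1}{1 + 2\mathbb{E}_{\pi^{(i)}}[Z_0^{(i)}]} \leq \frac{2p^{(i)} - 1}{(2p^{(i)} - 1) + 2(1-p^{(i)})} = 2p^{(i)} - 1,
\end{align*}
which tends to $0$ as $i \to \infty$. The main obstacle I anticipate is the careful transfer of the pathwise inequality $W_n^{(i)} \leq Z_n^{(i)}$ to an inequality between the stationary expectations; this follows from the monotone coupling argument at the end of the proof of Lemma \ref{thelemma}, but requires checking that the ``zero-cookie'' simple biased random walk fits cleanly into the backwards branching-like framework, either directly or by interpreting it as an excited asymmetric walk whose single cookie has strength equal to the bias.
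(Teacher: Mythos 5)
Your proposal is correct and follows essentially the same route as the paper: the paper also bounds $v(M+i,\mathbf{p_i})$ by the speed of a simple random walk with parameter $p^{(i)}$, invoking the coupling from the proof of Lemma \ref{thelemma} and identifying that simple walk with an excited asymmetric walk having one cookie of strength $p^{(i)}$ and bias $p^{(i)}$, so that $v^{*}(p^{(i)},p^{(i)})=2p^{(i)}-1\to 0$. Your version merely writes out explicitly the coupling and the stationary-mean computation that the paper leaves as a reference to Lemma \ref{thelemma} and Theorem \ref{speedthm}.
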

As $i$ increases, the number of cookies at each site increases and the strength of each cookie decreases in such a way that the ``total drift" at each site, as measured by the parameter $\delta$, is unchanged. That the speed should decrease as $i$ increases is intuitive, since as $i \rightarrow \infty$, the excited random walk acts more and more like a simple symmetric random walk, which has speed 0. The proof below is guided by this intuition.

\begin{proof}
A simple random walk with parameter $p^{(i)}$ is equivalent to an excited asymmetric random walk with one cookie of strength $p^{(i)}$ and bias parameter $p^{(i)}$. From the proof of Lemma \ref{thelemma}, it is clear that $v(M_i, \mathbf{p_i} ) \leq v^{*}(p^{(i)}, p^{(i)})$. Since $ v^{*}(p^{(i)}, p^{(i)}) = v^s (p^{(i)}) = 2p^{(i)} -1 $ where $v^s (p^{(i)})$ is the speed of a simple random walk with parameter $p^{(i)}$, we have $v(M_i, \mathbf{p_i} ) \leq 2p^{(i)} -1 $. As $i \rightarrow \infty$, $p^{(i)} \rightarrow \frac{1}{2}$, and hence $v_s(p^{(i)}) \rightarrow 0$. Therefore $v(M_{i},\mathbf{p_{i}})$ is a sequence of positive numbers bounded above by a sequence which tends to zero, and hence itself tends to zero.
\end{proof}

It is clear from Theorem \ref{deltathm} that two excited random walks with the same $\delta$ value need not have the same speed, and hence that the speed of an excited random walk cannot be expressed as a function of $\delta$.

We have a corollary which shows that it is possible to construct an excited random walk with parameters $M \geq 3, \mathbf{p} \in \mathbb{R}^M$ with $ \delta(M,\mathbf{p}) $ arbitrarily large and $v(M,\mathbf{p})$ arbitrarily small.

\begin{cor}
	Given any $\eta, \epsilon > 0 $, there exist $M \geq 3, \mathbf{p} \in \mathbb{R}^M$ such that $\delta(M,\mathbf{p}) > \eta$ and $v(M,\mathbf{p}) < \epsilon$.
	\label{smallspeed}
\end{cor}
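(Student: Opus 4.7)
The plan is to deduce this directly from Theorem \ref{deltathm}. The corollary asks for a walk simultaneously achieving large $\delta$ and small speed, and Theorem \ref{deltathm} already produces, for each fixed $\delta_0 > 2$, a sequence of walks all having $\delta$ parameter equal to $\delta_0$ whose speeds tend to $0$. So the only real work is to pick the preserved $\delta_0$ to exceed $\eta$ and then take enough cookies.

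More precisely, given $\eta, \epsilon > 0$, first I would reduce to the case $\eta > 2$: if the statement is established for $\eta' := \max(\eta, 2) + 1$, any witness $(M,\mathbf{p})$ with $\delta(M,\mathbf{p}) > \eta'$ automatically satisfies $\delta(M,\mathbf{p}) > \eta$, so no generality is lost. Next I would fix $M_0 \geq 3$ and a constant $p \in (1/2, 1)$ such that $\delta_0 := M_0(2p-1) > \eta'$; for instance, taking $M_0 = 3$ and $p$ sufficiently close to $1$ works, since then $\delta_0 = 3(2p-1)$ can be made arbitrarily close to $3$, and by increasing $M_0$ it can be made arbitrarily large. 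In particular $\delta_0 > 2$, so the hypotheses of Theorem \ref{deltathm} are satisfied.

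Applying Theorem \ref{deltathm} with these choices of $M_0$ and $p$ yields the sequence $\mathbf{p_i} \in \mathbb{R}^{M_0+i}$ defined by \eqref{p_idef}--\eqref{p_ivecdef} with
\[
\delta(M_0+i, \mathbf{p_i}) = \delta_0 > \eta \quad \text{for every } i,
\qquad
\lim_{i \to \infty} v(M_0+i, \mathbf{p_i}) = 0.
\]
By the definition of this limit, I can choose $i_0 \in \mathbb{N}$ so large that $v(M_0+i_0, \mathbf{p_{i_0}}) < \epsilon$. Setting $M := M_0 + i_0$ and $\mathbf{p} := \mathbf{p_{i_0}}$, we have $M \geq M_0 \geq 3$, $\delta(M,\mathbf{p}) = \delta_0 > \eta$, and $v(M,\mathbf{p}) < \epsilon$, as required.

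There is no real obstacle beyond checking that the construction in Theorem \ref{deltathm} admits a large enough starting $\delta_0$, which is immediate because $\delta_0 = M_0(2p-1)$ is unbounded as either $M_0$ or $p$ grows. The corollary is therefore essentially an unpacking of Theorem \ref{deltathm} together with the observation that the preserved quantity $\delta_0$ in that theorem can be chosen freely (subject only to $\delta_0 > 2$).
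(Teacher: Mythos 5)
Your proposal is correct and follows essentially the same route as the paper: fix the preserved value $\delta_0 = M_0(2p-1)$ above $\max(\eta,2)$ by taking $M_0$ (or $p$) large enough, then invoke Theorem \ref{deltathm} to drive the speed below $\epsilon$ along the sequence $\mathbf{p_i}$, all of whose members retain $\delta = \delta_0 > \eta$. The paper states this in one sentence; you have merely filled in the quantifier bookkeeping.
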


Corollary \ref{smallspeed} easily follows from Theorem \ref{deltathm}, since $ \delta(M,\mathbf{p}) $ can be made arbitrarily large by increasing $ M $ and Theorem \ref{deltathm} shows how $v(M_{i},\mathbf{p_{i}}) $ can be made arbitrarily small.

We now give a corollary and example showing how to construct specific cookie vectors whose $\delta$ parameters and speed are in opposite relations. 

\begin{cor}
	\label{vdelta}
	There exist cookie vectors $\mathbf{q} \in \mathbb{R}^{3}, \mathbf{p_{i}} \in \mathbb{R}^{3+i}$ such that $v(3, \mathbf{q}) > v(3+i,\mathbf{p_{i}}) $ and $\delta (3, \mathbf{q}) < \delta (3+i,\mathbf{p_{i}}) $.
\end{cor}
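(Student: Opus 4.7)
The plan is to assemble the example directly from Theorem \ref{deltathm}. First I would fix a parameter $p \in (2/3, 1)$ and set $\mathbf{p} = (p,p,p) \in \mathbb{R}^{3}$, so that $\delta(3, \mathbf{p}) = 3(2p-1) > 2$. I then define $\mathbf{p_i}$ from $\mathbf{p}$ using the formulas \eqref{p_idef} and \eqref{p_ivecdef}, which ensures $\delta(3+i,\mathbf{p_i}) = \delta(3,\mathbf{p})$ for every $i \geq 1$, and Theorem \ref{deltathm} gives $v(3+i,\mathbf{p_i}) \to 0$ as $i \to \infty$.

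Next I would pick a second value $q$ with $2/3 < q < p$ and take $\mathbf{q} = (q,q,q) \in \mathbb{R}^{3}$, so that
\begin{align*}
2 < \delta(3, \mathbf{q}) = 3(2q-1) < 3(2p-1) = \delta(3, \mathbf{p}) = \delta(3+i, \mathbf{p_i})
\end{align*}
for every $i$. Since $\delta(3,\mathbf{q}) > 2$, Theorem \ref{verybasics} gives $v(3,\mathbf{q}) > 0$. Because $v(3+i, \mathbf{p_i}) \to 0$, I can select $i$ large enough that $v(3+i, \mathbf{p_i}) < v(3, \mathbf{q})$, and for this index both required inequalities $v(3,\mathbf{q}) > v(3+i, \mathbf{p_i})$ and $\delta(3,\mathbf{q}) < \delta(3+i, \mathbf{p_i})$ hold simultaneously.

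The only real obstacle is verifying that a concrete $\mathbf{q}$ has positive speed, but this is handled entirely by Theorem \ref{verybasics}, which guarantees positivity as soon as $\delta(3,\mathbf{q}) > 2$; no explicit evaluation of $v(3,\mathbf{q})$ is needed, because Theorem \ref{deltathm} lets us drive $v(3+i, \mathbf{p_i})$ below any positive threshold. A clean concrete instantiation is $p = 0.95$ and $q = 0.9$, giving $\delta(3,\mathbf{p}) = 2.7$ and $\delta(3,\mathbf{q}) = 2.4$, both above the transient-to-positive-speed threshold of $2$. This produces the example claimed in the corollary.
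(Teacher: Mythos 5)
Your argument contains one arithmetic slip that actually matters: you claim that $p \in (2/3,1)$ gives $\delta(3,\mathbf{p}) = 3(2p-1) > 2$, and likewise that $q \in (2/3,p)$ gives $\delta(3,\mathbf{q}) > 2$. But $3(2q-1) > 2$ requires $q > 5/6$, not $q > 2/3$; for $q \in (2/3, 5/6]$ you only get $\delta(3,\mathbf{q}) \in (1,2]$, where Theorem \ref{verybasics} forces \emph{zero} speed. In that range the hypothesis of Theorem \ref{deltathm} fails for $\mathbf{p}$, and, worse, your $\mathbf{q}$ could have $v(3,\mathbf{q}) = 0$, making the strict inequality $v(3,\mathbf{q}) > v(3+i,\mathbf{p_i}) \geq 0$ unattainable. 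The fix is simply to require $5/6 < q < p < 1$, which is exactly the condition the paper imposes; your concrete instantiation $p = 0.95$, $q = 0.9$ happens to satisfy it, so that part survives.

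With that correction your proof is logically sound, but it takes a genuinely different and strictly less informative route than the paper's. You argue purely by existence: $v(3,\mathbf{q}) > 0$ by Theorem \ref{verybasics}, $v(3+i,\mathbf{p_i}) \to 0$ by Theorem \ref{deltathm}, hence some sufficiently large $i$ works --- but you cannot say which $i$. The paper instead combines the explicit lower bound $f(q) \leq v(3,\mathbf{q})$ of \eqref{lowerbound} with the bound $v(3+i,\mathbf{p_i}) \leq 2p^{(i)} - 1 = 3(2p-1)/(3+i)$ extracted from the proof of Theorem \ref{deltathm}, which yields a computable threshold $N$ and hence the explicit numerical example following the corollary ($p = 0.99$, $q = 0.85$, $i = 8$). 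Since Section 4.1 explicitly criticizes the partial-ordering arguments for being non-constructive, the quantitative step you skipped is not incidental: it is the feature that distinguishes the paper's contribution. Your version proves the corollary as literally stated, but it would not support the example that follows it.
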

\begin{proof}
	Let $M = 3$. Choose $q,p$ such that $\frac{5}{6} < q < p < 1$. Let $\mathbf{p} = (p, p, p) $,  $\mathbf{q} = (q, q, q) $, and define $p^{(i)}, \mathbf{p_i} $ as in Theorem \ref{deltathm}. While $v(3, \mathbf{q}$ cannot be calculated exactly for a cookie vector $\mathbf{q} = (q, q, q) $, there is a lower bound $f(q)$ on the speed $v(3, \mathbf{q})$ \cite{reu16} given by:
	\begin{align}
	v(3, \mathbf{q}) \geq f(q) = \frac{(6q-5)\left(q^2 - 2q - 1\right)}{24q^4 - 42q^3 - 3q^2 + 28q - 9}.
	\label{lowerbound}
	\end{align}
	Choose $N \in \mathbb{N}$ such that   
	\begin{align*}
	N \geq \frac{6 \left(p \left(24 q^4-42 q^3-3 q^2+28 q-9\right)+2 \left(-6 q^4+9 q^3+5 q^2-8 q+1\right)\right)}{(6 q-5) \left(q^2-2 q-1\right)}.
	\end{align*}
	Then for all $i > N$,
	\begin{align*}
	v^{s}(p^{(i)}) = 2p^{(i)} -1 = \frac{3(2p-1)}{(3+i)} < f(q) \leq v(3,\mathbf{q}),
	\end{align*}
	It is clear from the proof of Theorem \ref{deltathm} that $v(3+i, \mathbf{p_i} ) \leq v^{s}(p^{(i)}) $, so we $v(3+i, \mathbf{p_i} ) < v(3, \mathbf{q} ) $, and simple algebra also shows $ \delta ( 3, \mathbf{q} ) = 3 (2 q -1) < 3 (2 p -1) = \delta (3+i , \mathbf{p_i} ) $.
\end{proof}

\begin{exmp}
	Let $p = 0.99, q = 0.85$, $ N = 7 $. Let $\mathbf{p_{i}}$ and $p^{(i)}$ be defined as in Theorem \ref{deltathm}. Then $ p^{(8)} =  \frac{1}{2} + \frac{3(2*0.99-1)}{2*11} = \frac{697}{1100} $, and it is clear from the proof of Corollary \ref{vdelta} that $v(11, \mathbf{p_8} ) < v(3, \mathbf{q} ) $ while $ \delta (11 , \mathbf{p_8} ) = 2.94 > 2.1 = \delta ( 3, \mathbf{q} )  $.
\end{exmp}

Finally, we give a proposition and example that show how to construct two excited random walks with positive speeds whose initial cookies and speed are in opposite relations.
\begin{prop}
	\label{vconstruction}
	There exist cookie vectors $\mathbf{q} \in \mathbb{R}^{3},\mathbf{p} \in \mathbb{R}^{M}$ such that $q_{1} < p_{1}$, but $v(3,\mathbf{q}) > v(M,\mathbf{p}) > 0$.
\end{prop}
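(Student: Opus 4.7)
The plan is to mirror the strategy of Corollary \ref{vdelta}, but engineered so that the inequality between first cookies is reversed rather than the inequality between $\delta$ parameters. I take $\mathbf{q} = (q, q, q) \in \mathbb{R}^3$ with $q \in (5/6, 1)$, so that $\delta(3, \mathbf{q}) > 2$ and the explicit lower bound $v(3, \mathbf{q}) \geq f(q) > 0$ from \eqref{lowerbound} is available. For the other vector I take $\mathbf{p} = (p_1, p_0, p_0, \ldots, p_0) \in \mathbb{R}^M$, where $p_1 \in (q, 1)$, $p_0 \in (1/2, 1)$, and $M$ is chosen large enough that
\[
\delta(M, \mathbf{p}) = (2p_1 - 1) + (M-1)(2p_0 - 1) > 2,
\]
so that $v(M, \mathbf{p}) > 0$ by Theorem \ref{verybasics}. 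Note $q_1 = q < p_1$ by construction.

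The critical step is to bound $v(M, \mathbf{p})$ from above by the speed of an excited asymmetric walk. I compare $\mathbf{p}$ with the excited asymmetric walk having a single $p_1$-cookie and bias $p_0$. The two processes share the same transition probabilities on the first $M$ visits to any site (both use $p_1$ on visit $1$ and $p_0$ on visits $2$ through $M$), and on later visits the asymmetric walk uses $p_0 > 1/2$ while the standard walk uses $1/2$. The coupling from the proof of Lemma \ref{thelemma} applies essentially verbatim and gives pointwise stochastic domination of the associated backwards branching-like processes, yielding
\[
v(M, \mathbf{p}) \leq v^*(p_0, p_1) = \frac{2p_0 - 1}{2p_0 - 1 + 2(1 - p_1)}
\]
by Theorem \ref{speedthm}.

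To conclude, for fixed $p_1 < 1$ the right-hand side tends to $0$ as $p_0 \to 1/2^+$. I therefore pick $p_0$ close enough to $1/2$ that $v^*(p_0, p_1) < f(q)$, and then enlarge $M$ so that $\delta(M, \mathbf{p}) > 2$ is restored. Chaining the inequalities gives
\[
0 < v(M, \mathbf{p}) \leq v^*(p_0, p_1) < f(q) \leq v(3, \mathbf{q}),
\]
along with $q_1 < p_1$, which is the conclusion. A concrete instance would be $q = 0.9$, $p_1 = 0.95$, $p_0 = 0.55$, $M = 13$: here $v^*(0.55, 0.95) = 1/2$, $f(0.9) \approx 0.72$, and $\delta(13, \mathbf{p}) = 2.1 > 2$, so all inequalities are verified numerically.

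I do not anticipate a serious obstacle. The coupling is a direct translation of Lemma \ref{thelemma}'s argument and needs no new ideas. The apparent tension between shrinking $p_0$ toward $1/2$ (which is what makes $v^*(p_0, p_1)$ small) and keeping $\delta(M, \mathbf{p}) > 2$ is resolved simply by letting $M$ grow, since each weak cookie still contributes $2p_0 - 1 > 0$ to $\delta$. The one step that deserves a moment of care is confirming that the stochastic domination of the two backwards branching-like processes passes through Theorem \ref{mediumbasics} to produce the speed inequality, but this is precisely the mechanism used in the final paragraph of Lemma \ref{thelemma}'s proof.
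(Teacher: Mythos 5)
Your proposal is correct and follows essentially the same route as the paper's own proof: bound $v(3,\mathbf{q})$ below by $f(q)$, bound $v(M,\mathbf{p})$ above by $v^*(p_0,p_1)$ via the coupling of Lemma \ref{thelemma}, and shrink the bias toward $1/2$ while enlarging $M$ to keep $\delta(M,\mathbf{p})>2$ (the paper just writes $p_0 = 1/2+\epsilon$ and gives explicit bounds on $\epsilon$ and $M$ where you argue by a limit). Your numerical instance checks out as well.
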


\begin{proof}
	Let $\frac{5}{6} < q < p < 1$ and $\mathbf{q} = (q,q,q) $. Then $ \delta(3, \mathbf{q} ) > 2 $. Choose $ \epsilon > 0 $ such that
	\begin{align*}
	\epsilon < \frac{(1-p)f(q)}{1-f(q)} = \frac{(1-p) (6 p-5) \left(q^2-2 q-1\right)}{2 \left(12 q^4-24 q^3+7 q^2+12 q-7\right)}
	\end{align*}
	where $f(q)$ is as defined in \eqref{lowerbound}. Then we have
	\begin{align*}
	v^{*} \left(1/2 + \epsilon,p \right) & = \frac{2(1/2 + \epsilon) - 1}{2(1/2 + \epsilon) - 1 + 2(1-p)}
	\\
	& = \frac{\epsilon}{\epsilon + (1-p)}
	\\
	& < f(q), 
	\end{align*} 
	so $ v^{*} \left(1/2 + \epsilon,p \right) < v(3, \mathbf{q} )$. Choose $N$ such that
	\begin{align*}
	N \geq \frac{4-2p}{1+2\epsilon}.
	\end{align*}
	
	Then for $ \mathbf{p} = (p, 1/2 + \epsilon, 1/2 + \epsilon, \ldots, 1/2 + \epsilon) \in \mathbb{R}^{M} $ with $ M > N $, we have $\delta(M, \mathbf{p} ) > 2 $. It is clear from the proof of Lemma \ref{thelemma} together with the values of $\delta(3,\mathbf{q})$ and $\delta(M,\mathbf{p})$ that $0 < v(M, \mathbf{p} ) < v^{*}(1/2 + \epsilon, p) $. Therefore, $ v(M, \mathbf{p} ) < v^{*}(1/2 + \epsilon, p) < v(3, \mathbf{q} ) $, whereas ${p}_{1} = p > q $ by construction.
\end{proof}

\begin{exmp}
	Let $p = 0.99, q = 0.85$. We choose $ \epsilon = 0.0045 $ and $ N = 114 $. It is clear from the proof of Proposition \ref{vconstruction} that if $M > N$, $\mathbf{p} = (p, 0.5+\epsilon, 0.5+\epsilon, \ldots, 0.5+\epsilon)  \in \mathbb{R}^{M}$, $\mathbf{q} = (q,q,q)$, we have $0 < v(M, \mathbf{p} ) < v(3, \mathbf{q} ) $ and ${q}_{1} < {p}_{1}$.
\end{exmp}

\subsection{Open Question}

Let $\mathbf{p_{i}}$ be described as in Theorem \ref{deltathm}. Theorem \ref{deltathm} shows that the terms $v(M+{i}, \mathbf{p_{i}})$ become arbitrarily small as $i$ increases, but do they do so monotonically? That is, does the inequality $  v(M+{i+1}, \mathbf{p_{i+1}}) \leq v(M+{i}, \mathbf{p_{i}}) $ hold for all $i$? It is in general not possible to use the partial ordering $ ( \preceq ) $ to answer this question, as is demonstrated by the following example. 

\begin{exmp}
	
	Let $M = 3$, $ p > 1/2 $, and $ \mathbf{p} = (p, p, p) $. Then $ p^{(1)} = \frac{6p+1}{8} \text{ and } p^{(2)} = \frac{3p+1}{5} $ by \eqref{p_idef}. In order to compare cookie vectors of the same length, we will consider 
	\begin{align*}
	& \mathbf{\tilde{p}_1} = (p^{(1)}, p^{(1)}, p^{(1)}, p^{(1)}, 1/2) \text{ and } 
	\\
	& \mathbf{p_2} = (p^{(2)}, p^{(2)}, p^{(2)}, p^{(2)}, p^{(2)} ).
	\end{align*}
	For the limit defined in Theorem \ref{deltathm} to be monotone, we must have $v(5,\mathbf{p_2}) \leq v(4,\mathbf{p_1}) $, which we can try to prove by showing $ \mathbf{p_2} \preceq \mathbf{\tilde{p}_1} $. But to have $ \mathbf{p_2} \preceq \mathbf{\tilde{p}_1} $, we must have that the probability of all $ p^{(2)} $ cookies being successes is less than the probability of four $ p^{(1)} $ and one $1/2$ cookie being successes, i.e. $ (p^{(2)})^5 \leq \frac{(p^{(1)})^4}{2} $. On the contrary, we have
	\begin{align*}
	(p^{(2)})^5 - \frac{(p^{(1)})^4}{2} = & \ \left(\frac{(3 p+1)}{5} \right)^5-\frac{1}{2} \left(\frac{(6 p+1)}{8} \right)^4
	\\
	& \ = \frac{9 (2p-1)^2 \left(55296 p^3+34956 p^2+7572 p+563\right)}{25600000}
	\\
	& \qquad > 0 \quad \text{for } p > 1/2.
	\end{align*}
	
	Since $ (p^{(2)})^5 > \frac{(p^{(1)})^4}{2} $, we have $ \mathbf{p_2} \npreceq \mathbf{\tilde{p}_1} $. Similarly, since $ p^{(2)} < p^{(1)} $, we have $ \mathbf{p_2} \nsucceq \mathbf{\tilde{p}_1} $, so we cannot determine any relationship between $v(5,\mathbf{p_2})$ and $v(4,\mathbf{p_1})$ by this partial ordering. 
	
\end{exmp}

\newpage
\bibliography{excitedAsymmetric}{}

\begin{thebibliography}{1}

\bibitem{bassingh}
A.-L. {Basdevant} and A.~{Singh}.
\newblock On the speed of a cookie random walk.
\newblock {\em Prob. Theory Related Fields}, 141(3):625--645, 2008.

\bibitem{benjamini}
I.~{Benjamini} and D.~B. {Wilson}.
\newblock Excited random walk.
\newblock {\em Electron. Comm. Probab.}, 8(9):86--92, 2003.

\bibitem{reu16}
E.~{Bossen}, B.~{Kidd}, O.~{Levin}, J.~{Peterson}, J.~{Smith}, and K.~{Stangl}.
\newblock Upper and lower bounds on the speed of a one dimensional excited
  random walk.
\newblock July 2017.
\newblock Preprint. Available at \url{https://arxiv.org/abs/1707.02969}.

\bibitem{holmes}
M.~{Holmes} and T.S. {Salisbury}.
\newblock A combinatorical result with applications to self-interacting random
  walks.
\newblock {\em J. Combin. Theory Ser. A}, 119(2):460--475, 2012.

\bibitem{kosygina}
E.~{Kosygina} and J.~{Peterson}.
\newblock Excited random walks with markovian cookie stacks, April 2015.
\newblock To appear in \emph{Ann. Inst. Henri Poincare{\'e} Probab. Stat.}

\bibitem{kosygina2}
E.~{Kosygina} and M.~{Zerner}.
\newblock Positively and negatively excited random walks on integers, with
  branching processes.
\newblock {\em Electron. J. Probab.}, 13:1952--1979, 2008.

\bibitem{peterson}
J.~{Peterson}.
\newblock Strict monotonicity properties in one-dimensional excited random
  walks.
\newblock {\em Markov Process. Related Fields}, 19:721--734, 2013.

\bibitem{zerner}
M.P.W. {Zerner}.
\newblock Multi-excited random walks on integers.
\newblock {\em Prob. Theory Related Fields}, 133(1):98--122, 2005.

\end{thebibliography}

\newpage
\appendix
\appendixpage
\section{Computing $\pi(0)$ and $\pi(1)$}

We attempted to compute $\pi(0)$ and $\pi(1)$, where $\pi$ is the stationary distribution of the backwards branching-like process associated to an excited asymmetric random walk with one cookie of strength $p_{1}$ and bias parameter $p_{0}$. For an excited asymmetric random walk $(X_{n})_{n \geq 0}$ which is transient to the right, we have $\pi(0) =  P(U_{0}^{\infty} = 0)$ and $\pi(1) = P(U_{0}^{\infty} = 1)$, where $U_{0}^{\infty}$ is the total number of steps from $0$ to $-1$ during the walk. This fact follows from the equality in distribution of the processes $(U_{x}^{n})_{0 \leq x \leq n}$ and $(Z_{n})_{n \geq 0}$, and the fact that the limiting distribution of a Markov Chain is equal to its stationary distribution. 

Throughout this analysis, it will be helpful to know the probability $P_{z}(T_{x} < T_{y})$ in a simple random walk with parameter $p$, where $T_x$ is as defined in \eqref{tndef}. The solution to this problem, often called the Gambler's Ruin problem, is known to be given by the function

\begin{align*}
h(p,x,y,z) = \begin{cases} 
\frac{1-(\frac{1-p}{p})^{z-y}}{1-(\frac{1-p}{p})^{x-y}}, & p \neq \frac{1}{2} \\
\frac{z-y}{x-y}, & p = \frac{1}{2}. 
\end{cases}
\end{align*}
We will use our interpretation of the stationary distribution in terms of the random variable $U_{0}^{\infty}$ together with the function $h$ to investigate $\pi(0)$ and $\pi(1)$. We have
\begin{align*}
\pi(0) = P(\text{walker never steps from 0 to -1}) = \prod_{k = 0}^{\infty} P\left(\inf_{T_{k}\leq n \leq T_{k+1}}X_{n} > -1 \right).
\end{align*}
We will condition each probability in the infinite product above on $X_{T_{k}+1}$ using the following probabilities:
\begin{align*}
&P(X_{T_{k} + 1} = k+1) = p_{1}, \\
&P(X_{T_{k} + 1} = k-1) = 1 - p_{1}, \\ 
&P\left(\inf_{T_{k}\leq n \leq T_{k+1}}X_{n} > -1 | X_{T_{k} + 1} = k+1\right) = 1, \text{ and} \\
&P\left(\inf_{T_{k}\leq n \leq T_{k+1}}X_{n} > -1 | X_{T_{k} + 1} = k-1\right) = h(p_{0}, k+1, -1, k-1).
\end{align*}
All of these equalities are clear except the last, which holds because after the walker steps down from $k$ at time $T_{k}$, there are no cookies at any site $j \in \{0,1,...,k\}$. Now, conditioning $P\left(\inf_{T_{k}\leq n \leq T_{k+1}}X_{n} > -1 \right)$ on $X_{T_{k}+1}$ yields
\begin{align}
\prod_{k = 0}^{\infty} P\left(\inf_{T_{k}\leq n \leq T_{k+1}}X_{n} > -1 \right) = \prod_{k=0}^{\infty}\{p_{1}+(1-p_{1}) h(p_{0}, k+1,-1,k-1)\},
\end{align}
To compute $\pi(1)$, we use the interpretation $\pi(1) = P(U_{0}^{\infty} = 1)$ to determine
\begin{align}
\pi(1) = \sum_{k = 0}^{\infty} P(\text{step left from 0 once between } T_{k} \text{ and }T_{k+1}\text{, nowhere else}).
\end{align}
We further note that the probability of stepping left nowhere except possibly between $T_{k}$ and $T_{k+1}$ is related to $\pi(0)$ by the equation
\begin{align}
\nonumber &P(\text{no left steps from except possibly between } T_{k} \text{ and }T_{k+1})
\\
&=\prod_{j \geq 0, j \neq k} P\left(\inf_{T_{j}\leq n \leq T_{j+1}}X_{n} > -1  \right) = \frac{\pi(0)}{p_{1} + (1-p_{1})h(p_{0,k+1,-1,k-1})}.
\end{align}
Now we observe that the probability of the walker stepping left from zero once between $T_{k}$ and $T_{k+1}$ and nowhere else is equal to the probability that he does not step left from $0$ at any time not between $T_{k}$ and $T_{k+1}$ multiplied by the probability that he steps left exactly once between $T_{k}$ and $T_{k+1}$ given he has not stepped left from zero elsewhere. Further, the probability that the walker steps left exactly once between $T_{k}$ and $T_{k+1}$ given he has not stepped left from zero elsewhere is given by the probability that he reaches $-1$ between $T_{k}$ and $T_{k+1}$ exactly once. Mathematically,
\begin{align*}
P(\text{walker steps left from 0 exactly once between } T_{k} \text{ and } T_{k+1}) 
\end{align*}
\begin{align}
= P(\# \{T_{k} \leq n \leq T_{k+1} : X_{n} = -1\} = 1),
\end{align}
which we can compute exactly using gambler's ruin probabilities:
\begin{align}
\nonumber &P(\# \{T_{k} \leq n \leq T_{k+1} : X_{n} = -1\} = 1)
\\
&= (1-P(\text{no left steps from 0 between } T_{k} \text{ and } T_{k+1}))h(p_{0},k+1,-1,0)
\end{align}
Now combining (A.2), (A.3), (A.4), and (A.5) yields the equation  
\begin{align}
\pi(1) = \sum_{k = 0}^{\infty} \pi(0) \frac{(1 - p_{1} - (1-p_{1})h(p_{0},-1,k+1,k-1))h(p_{0},k+1,-1,0)}{p_{1}+(1-p_{1})h(p_{0}, k+1,-1,k-1)}
\end{align}
Unfortunately, while (A.1) and (A.6) are explicit, they are too difficult to simplify even with the help of software - a fact which highlights the complexity of the backwards branching-like process and its stationary distribution.

\end{document}